\documentclass[12pt]{article}
\usepackage[utf8]{inputenc}
\usepackage[a4paper, margin=1.3in]{geometry}
\usepackage[T1]{fontenc}
\usepackage{lmodern}
\usepackage{tikz-cd}
\usepackage{amsmath,amsfonts,amssymb}
\usepackage{amsthm}
\usepackage{graphicx}
\usepackage[all,cmtip]{xy}
\usepackage[english]{babel}
\usepackage{xurl}
\usepackage{comment}
\usepackage[x11names,dvipsnames,svgnames]{xcolor}
\usetikzlibrary{positioning}
\usetikzlibrary{calc}
\usetikzlibrary{patterns}

\newcommand{\keywords}[1]{%
\par\medskip
\noindent\textbf{Keywords: }#1
}
\newcommand{\subjclass}{%
\par\bigskip\par \noindent\textbf{2020 Mathematics Subject Classification: }%
}

\newtheorem{theorem}{Theorem}
\newtheorem{prop}[theorem]{Proposition}
\newtheorem{lemma}[theorem]{Lemma}
\newtheorem{cor}[theorem]{Corollary}
\theoremstyle{definition}
\newtheorem{definition}{Definition}
\newtheorem{example}{Example}
\newtheorem{question}{Question}
\newtheorem{problem}{Problem}
\newtheorem{construction}{Construction}

\newcommand\note[1]{\textbf{[#1]}}
\renewcommand\note[1]{}

\begin{document}
\title{\normalsize\bfseries
OBSTRUCTIONS TO
TOTAL RAINBOW FORESTS\\
IN EDGE-COLORED GRAPHS}
\vspace{-2em}
\author{\small
MARWA MOSALLAM\thanks{Department of Mathematics \& Statistics,
Binghamton University, Vestal, NY 13850, USA.
e-mail: \texttt{mmosallam@binghamton.edu}}
\text{ AND }
THOMAS ZASLAVSKY\thanks{Department of Mathematics \& Statistics,
Binghamton University, Vestal, NY 13850, USA.
e-mail: \texttt{zaslav@math.binghamton.edu}}
}

\date{}

\maketitle

\begin{abstract}
A total rainbow forest in an edge-colored graph is a forest that contains every edge color exactly once.  Using a necessary and sufficient condition that a total rainbow forest exists, we demonstrate the existence of huge numbers of edge-colored graphs that are minimal obstructions to such existence.  
\end{abstract}

\keywords{edge-colored graph; rainbow spanning tree; total rainbow forest}

\subjclass{05C15}.

\tableofcontents

\section{Introduction}

In an edge-colored graph, what is the maximum size of a rainbow (or ``heterochromatic'') forest; that is, a forest whose edges all have different colors?  Especially, does the graph have a \emph{total} rainbow forest, whose edges represent every color?  We study these questions and especially the obstructions to the existence of total rainbow forest.

Our attention was drawn to this question by Tarik Aougab, who proposed to the first author that a criterion for the existence of a total rainbow tree, or forest, or connected subgraph, might help with the problem of a largest 1-system of curves on a genus $g$ surface, and also told us about the use of the Matroid Intersection Theorem (which was suggested to him by Tony Huynh; see also \cite{Hu}).  For a statement of the 1-system problem we refer the reader to, e.g., \cite{AG, MRT}.

There has been considerable interest for years in rainbow forests, often on the existence of many edge-disjoint spanning rainbow forests in complete graphs; see for example \cite{Bru, KL, GK}.  Our focus is the existence and size of a single rainbow tree or forest, as for instance in \cite{Suz, AA}.

Let $G = (V,E)$ be a connected, edge-colored graph (not necessarily properly colored).  
Suzuki \cite[Theorem 7]{Suz} proved an important theorem, which we restate here in terms of matroids (although Suzuki stated and proved it purely in terms of graphs): If $G$ has order $n$ and any number $t$ of colors, then $G$ has a rainbow spanning tree if and only if for every edge set $A$ that is a union of color classes,
$$n - 1 - r_2 (E \setminus A) \leq r_1(A),$$
where $r_1$ and $r_2$ are the rank functions of matroids $M_1$ and $M_2$, both having the ground set $E,$ such that in $M_1$ a set is independent if it contains at most one edge of each color (this is known as the partition or transversal matroid; $r_1(B)$ is the number of color classes in $B$) and in $M_2$ a set is independent if it does not contain a cycle (this is the cycle matroid).  (See \cite{Ox} for matroids.)  We provide a complete proof of Suzuki's theorem generalized to disconnected graphs, using the Matroid Intersection Theorem; see Theorem \ref{main} .
As far as we know, the use of these matroids and that theorem  was introduced by Broersma and Li \cite[Section 2]{Bro}; they observed that a rainbow forest is an edge set that is independent in both the cycle matroid and the partition matroid and they used this fact  to determine the largest size of a rainbow forest \cite[Lemma 3]{Bro}.  (We learned of the Matroid Intersection approach from Tony Huynh, via Tarik Aougab.) 

In our study, usually $t=n-1$ (for connected $G$) so Suzuki's condition simplifies to $r_2(B) \geq r_1(B)$ for every set $B$ that is a union of color classes (Theorem \ref{compare}), which is our necessary and sufficient condition for existence of a total rainbow forest in terms of matroid ranks.  
 
This simpler condition suggests a new approach to the existence problem by way of forbidden subgraphs, which we call \emph{obstructions} to the existence of a rainbow spanning tree (Section \ref{obs}).  An obstruction is simply an edge-colored graph that has too many colors to contain a total rainbow forest.  We show that there are many and diverse obstructions of any order $n$ (barring trivially small $n$) that are minimal, in the sense of not having a smaller obstruction.  That is our principal contribution.

\section{The Rainbow Forest Inequality}

We assume throughout that \emph{$G = (V,E)$ is a simple graph of order $n$, edge colored with total number of colors $t$}.

The number of connected components of a graph, written $c(\ )$, is part of the cycle-matroid rank formula  $r_2(B) = n - c(V,B)$ for an edge set $B \subseteq E$.

We will use the concept of isomorphism of edge-colored graphs.  For us, isomorphic edge-colored graphs are not essentially different.  

\begin{definition}
Let $G_1$ and $G_2$ be edge-colored graphs with respective color sets $C_1$ and $C_2$ and edge-color functions $c_1: E_1 \to C_1$ and $c_2: E_2 \to C_2$.  An \emph{isomorphism} of $G_1$ and $G_2$, written $G_1 \cong G_2$, is a pair $\theta, f$ consisting of a graph isomorphism $\theta: G_1 \to G_2$ and a color-set bijection $f: C_1 \to C_2$ such that $c_2(\theta(e)) = f(c_1(e))$ for each edge $e \in E_1$.
\end{definition}

In other words, an isomorphism of edge-colored graphs $G$ and $G'$ is a graph isomorphism that preserves the partition of the edge set into color classes; it need not preserve the colors themselves.

Now we define the fundamental concept of our work.

\begin{definition}
    A \emph{rainbow} or \emph{heterochromatic subgraph} of an edge-colored graph $G$ is a subgraph such that no color appears more than once amongst the edges of the subgraph.
\end{definition}

We are especially interested in spanning trees or forests.  
If our forest need not contain the total number of colors $t$ but need only contain some of them, call $u$ the number of colors desired in the forest, where $u \leq t.$  This will be a total rainbow forest if $u=t$.  

\begin{definition}
    The \emph{Rainbow Forest Inequality} is defined as follows: 
    $$\forall A \subseteq E(G){:}\ r_1(A) + r_2(E \setminus A) \geq t.$$
Given a number of colors $u \leq t$, the \emph{Rainbow Forest Inequality for $u$} is defined as follows: 
    $$\forall A \subseteq E(G){:}\ r_1(A) + r_2(E \setminus A) \geq u.$$ 
\end{definition}

\begin{prop}\label{upperbd}
If $G$ satisfies the Rainbow Forest Inequality, the total number of colors $t$ has to satisfy  $t \leq n - c(G).$
\end{prop}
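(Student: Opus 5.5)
Apply the Rainbow Forest Inequality to the single choice $A = \emptyset$. Then $r_1(\emptyset)=0$, because the empty set contains no color classes, and $E\setminus A = E$. The inequality therefore reads $t \le r_2(E)$.

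It remains to evaluate $r_2(E)$ with the cycle-matroid rank formula stated at the start of this section, $r_2(B) = n - c(V,B)$. Taking $B=E$ gives $r_2(E) = n - c(V,E) = n - c(G)$. Combining the two steps yields $t \le n - c(G)$, which is the claim.

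There is no real obstacle here. The only point to be careful about is that $c(G)$ means the number of components of the whole graph on vertex set $V$, with isolated vertices counted. This is exactly the quantity $c(V,E)$ appearing in the rank formula, so the two expressions agree. As a sanity check, the bound also has a direct interpretation: any forest in $G$ has at most $n - c(G)$ edges. A total rainbow forest uses one edge of each of the $t$ colors, so it needs $t$ edges, which is only possible when $t \le n-c(G)$.
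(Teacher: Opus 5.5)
Your proof is correct and follows the paper's approach exactly. The paper's one-line proof is to set $A=\emptyset$ in the Rainbow Forest Inequality, and you fill in the details: $r_1(\emptyset)=0$ and $r_2(E)=n-c(G)$.
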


\begin{proof}
Set $A = \emptyset$ in the Rainbow Forest Inequality.
\end{proof}

\begin{definition}
    The \emph{Partial Rainbow Inequality} of an edge set $A$ is defined as follows: 
    $$ r_1(A) + r_2(E \setminus A) \geq t.$$
Given a number of colors $u \leq t$, the \emph{Partial Rainbow Inequality for $u$} of $A$ is defined as follows: 
    $$ r_1(A) + r_2(E \setminus A) \geq u.$$
\end{definition}

To prove Theorem \ref{main} below we need the previously defined matroids $M_1$ and $M_2$ on the edge set of $G$.  The connection is given by interpreting the Matroid Intersection Theorem.  We think of a matroid as a pair $(E,\mathcal{I})$ where $E$ is the ground set and $\mathcal{I}$ is the class of independent sets.

\begin{theorem}[{Edmonds' Matroid Intersection Theorem 
\cite[Theorem 11.3.15]{Ox}, \cite[Ch.\ 3, Exercise 40(c)]{GM}}] 
For any two matroids $M_1 = (E, \mathcal{I}_1)$ and $M_2 =(E,\mathcal {I}_2)$ we have
$$\max _{I\in {\mathcal {I}}_{1}\cap {\mathcal {I}}_{2}}|I|=\min _{A\subseteq E} [ r_{1}(A)+r_{2}(E\setminus A) ]$$
where $r_{1}$ and $r_{2}$ are the respective rank functions of $M_{1}$ and  $M_{2}$.
\end{theorem}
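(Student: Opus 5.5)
The statement is Edmonds' Matroid Intersection Theorem. I will prove the weak inequality directly and obtain equality by induction on $|E|$, using deletion and contraction of a single element.

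\emph{The inequality $\max \le \min$.} Let $I \in \mathcal{I}_1\cap\mathcal{I}_2$ and $A\subseteq E$. Then $I\cap A$ is independent in $M_1$, so $|I\cap A|\le r_1(A)$. Likewise $I\setminus A$ is independent in $M_2$, so $|I\setminus A|\le r_2(E\setminus A)$. Adding these gives $|I|\le r_1(A)+r_2(E\setminus A)$.

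\emph{The reverse inequality, by induction on $|E|$.} Let $k=\min_A [r_1(A)+r_2(E\setminus A)]$. If every element is a loop in $M_1$ or in $M_2$, the min is $0$: take $A$ to be the set of $M_2$-loops, so $r_2(E\setminus A)=0$, and $A\setminus$ that set contains only $M_1$-loops, so $r_1(A)=0$. In this case there is nothing to prove.

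Otherwise pick $e$ that is a loop in neither matroid and compare two smaller instances on $E-e$:
\begin{itemize}
\item the deletions $M_1\setminus e$, $M_2\setminus e$;
\item the contractions $M_1/e$, $M_2/e$.
\end{itemize}
If the deletion instance has min $\ge k$, induction gives a common independent set of size $k$ avoiding $e$, and we are done. If the contraction instance has min $\ge k-1$, induction gives a common independent set $J$ of size $k-1$ in the contractions, and $J+e$ is common independent in $M_1,M_2$ of size $k$.

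So suppose both fail. Then there are $X,Y\subseteq E-e$ with
\[ r_1(X)+r_2(E-e-X)\le k-1 \]
and
\[ r_1(Y+e)-1+r_2(E-Y)-1\le k-2, \]
using $r_{M/e}(Z)=r(Z+e)-1$. Adding and applying submodularity of $r_1$ to $X$ and $Y+e$, and of $r_2$ to $E-e-X$ and $E-Y$:
\[ r_1(X\cup Y+e)+r_1(X\cap Y)+r_2(E-(X\cap Y))+r_2(E-e-(X\cup Y))\le 2k-1. \]
This works because $(E-e-X)\cup(E-Y)=E-(X\cap Y)$ and $(E-e-X)\cap(E-Y)=E-e-(X\cup Y)$.

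The left side splits as
\[ [r_1(X\cap Y)+r_2(E\setminus(X\cap Y))]+[r_1(X\cup Y+e)+r_2(E\setminus(X\cup Y+e))]\ge 2k, \]
by the definition of $k$, which is a contradiction. Hence one of the two cases holds and the induction closes.

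\emph{Main obstacle.} The crux is the bookkeeping in this submodularity step. One must track which side contains $e$ so that the union and intersection land exactly on complementary pairs of the form $(A, E\setminus A)$, and one must apply the contraction rank formula correctly. I would verify those set identities carefully. The base case and the weak inequality are routine.
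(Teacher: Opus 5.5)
Your proof is correct. The paper gives no proof of this theorem at all; it only cites Oxley's Theorem 11.3.15 and an exercise in Gordon--McNulty. So your argument is an independent route rather than a variant of the paper's.

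The usual textbook derivations go either through the matroid union theorem applied to $M_1$ and $M_2^*$, or through augmenting paths in an exchange graph. Yours is an induction on $|E|$ by deletion and contraction of a common non-loop. It needs only three ingredients: submodularity of rank, the formula $r_{M/e}(Z)=r(Z+e)-1$ for a non-loop $e$, and the fact that $J$ is independent in $M/e$ exactly when $J+e$ is independent in $M$.

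The uncrossing step checks out. The complements of $E-e-X$ and $E-Y$ are $X+e$ and $Y$. Since $e\notin Y$, their intersection is $X\cap Y$ and their union is $(X\cup Y)+e$. So the two bracketed terms really are of the form $r_1(A)+r_2(E\setminus A)$, and each is at least $k$.

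What your approach buys is a short, self-contained proof of exactly the min--max equality that Theorem~\ref{main} uses, with no need for duality or matroid union. What it gives up is the algorithm provided by the augmenting-path route and the more general union theorem.

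One slip needs repair in the base case: you have $A$ and $E\setminus A$ interchanged. Take $A$ to be the set of elements that are \emph{not} $M_2$-loops. Then $E\setminus A$ consists of $M_2$-loops, so $r_2(E\setminus A)=0$. By hypothesis every element of $A$ is an $M_1$-loop, so $r_1(A)=0$.
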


This gives our first theorem.  (A partial approach to this theorem is found in \cite[discussion after Lemma 3]{Bro}.)
\begin{theorem}\label{main}
Consider an edge-colored graph.  Let $t$ be the total number of colors on the edges.  
For a total rainbow forest to exist, it is necessary and sufficient that the Rainbow Forest Inequality be satisfied.

More generally, let $u$ be a number of colors such that $u \le t$.  For a rainbow forest of $u$ edges to exist, it is necessary and sufficient that the Rainbow Forest Inequality for $u$ be satisfied.
\end{theorem}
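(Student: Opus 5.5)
The plan is to apply Edmonds' Matroid Intersection Theorem directly to the partition matroid $M_1$ and the cycle matroid $M_2$ on $E$. The first step is to identify common independent sets with rainbow forests. A set $I \subseteq E$ is independent in $M_1$ exactly when it contains at most one edge of each color, that is, when $I$ is rainbow. It is independent in $M_2$ exactly when it contains no cycle, that is, when $(V,I)$ is a forest. Hence $\mathcal{I}_1 \cap \mathcal{I}_2$ is precisely the family of edge sets of rainbow forests, and the theorem gives
$$\max\{|I| : I \text{ a rainbow forest}\} = \min_{A \subseteq E}\,[\,r_1(A) + r_2(E\setminus A)\,].$$

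Next we treat the general statement for $u \le t$.

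\emph{Necessity.} Suppose a rainbow forest $F$ with $u$ edges exists. Then the maximum on the left is at least $u$, so the minimum is at least $u$. This says $r_1(A)+r_2(E\setminus A)\ge u$ for every $A$, which is the Rainbow Forest Inequality for $u$. One can also see this without the theorem: split $F = (F\cap A)\cup(F\setminus A)$. Since $F\cap A$ is independent in $M_1$, $|F\cap A| \le r_1(A)$. Since $F\setminus A$ is independent in $M_2$, $|F\setminus A| \le r_2(E\setminus A)$.

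\emph{Sufficiency.} Suppose the inequality for $u$ holds for all $A$. Then the minimum is at least $u$, so some rainbow forest $I$ has $|I|\ge u$. Every subset of a rainbow forest is again a rainbow forest, since both matroids are closed under taking subsets. So we may discard $|I|-u$ edges to obtain a rainbow forest with exactly $u$ edges.

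The total case is $u=t$. A rainbow forest with $t$ edges uses $t$ distinct colors, and there are only $t$ colors, so it contains every color exactly once. It is therefore a total rainbow forest. Conversely, a total rainbow forest has exactly $t$ edges. So the first statement follows from the second with $u=t$.

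No step is a genuine obstacle once the Matroid Intersection Theorem is assumed. The only points that need care are:
\begin{itemize}
\item the definitions of $M_1$ and $M_2$ must be verified to match rainbow and forest exactly;
\item "a rainbow forest of $u$ edges" is read as having exactly $u$ edges, which the truncation step handles;
\item the identification of "total" with "has $t$ edges" must be justified.
\end{itemize}
Disconnected graphs cause no difficulty, because the cycle matroid and its rank $n-c(V,B)$ are defined for arbitrary graphs.
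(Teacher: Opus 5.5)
Your proposal is correct and takes essentially the same route as the paper. You identify rainbow forests with the common independent sets of the partition matroid $M_1$ and the cycle matroid $M_2$, apply Edmonds' Matroid Intersection Theorem, compare the maximum with $u$, and specialize to $u=t$. Your additions (the direct necessity argument, truncating to exactly $u$ edges, and checking that a $t$-edge rainbow forest is total) only make explicit steps the paper leaves implicit.
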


\begin{proof}
We use the partition matroid $M_1$ and the cycle matroid $M_2$.  In $M_1$ an edge set is independent when it has at most one edge of each color.  In $M_2$, an edge set is independent when it is a forest.  Thus, a common independent set of $M_1$ and $M_2$ is the same as a rainbow forest.  If the forest has $t$ edges, it is total; otherwise, it is missing at least one color.

The matroid intersection formula with our matroids $M_1$ and $M_2$ is
\begin{equation}
\max_{F :  \text{ rainbow forest}} |F| = \min_{A \subseteq E} \big ( r_1(A) + r_2(E \setminus A)  \big) .
\label{E:rainbow-mif}
\end{equation}
To have a rainbow forest of $u$ edges, $u$ must be no greater than the maximum; that is, the left side must be at least as large as $u$.  
That means the minimum on the right must be at least as large as $u$.  
Therefore, a $u$-edge rainbow forest exists if and only if the right side is at least as large as $u$ for every set $A$; that is,
$$r_1(A) + r_2(E \setminus A) \geq u$$
for every $A \subseteq E$.  This is the Rainbow Forest Inequality for $u$.

In particular, a total rainbow forest (where $u=t$) exists if and only if 
$$r_1(A) + r_2(E \setminus A) \geq t$$
for every set $A \subseteq E$.  This is the Rainbow Forest Inequality.
\end{proof}

\begin{definition}
    An \emph{absolute} or \emph{nonoverlapping} set is an edge set $A$ that does not have any common edge colors between it and $E \setminus A.$  That is, it is a union of color classes of edges.
    \end{definition}

Notice that on the right-hand side of Equation \eqref{E:rainbow-mif}, if $A$ intersects some color classes of $E \setminus A,$ we might as well force it to take the entire class for each of those colors, so it becomes absolute; that does not change $r_1(A)$ and it cannot make $r_2(E \setminus A)$ larger.  This justifies the following lemma.

\begin{lemma}\label{absA}
If the Partial Rainbow Inequality for $u$ is satisfied for all absolute $A,$ then it is satisfied for all $A$. 
\end{lemma}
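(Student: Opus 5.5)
Given an arbitrary edge set $A \subseteq E$, the plan is to replace it by its absolute closure $\bar A$, the union of all color classes that meet $A$, and then transfer the inequality from $\bar A$ back to $A$. By construction $\bar A$ is a union of color classes, so it is absolute, and $A \subseteq \bar A$.

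The first step is to compare the two terms of the Partial Rainbow Inequality for $A$ and $\bar A$.

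For the partition matroid, $r_1(B)$ is the number of distinct colors appearing in $B$. The set $\bar A$ contains exactly the colors that occur in $A$, so $r_1(\bar A) = r_1(A)$.

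For the cycle matroid, $E \setminus \bar A \subseteq E \setminus A$. Rank in any matroid is monotone, or concretely: deleting edges cannot decrease the number of components $c(V,\cdot)$. Hence $r_2(E\setminus \bar A) \le r_2(E \setminus A)$.

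The second step is to combine these with the hypothesis applied to the absolute set $\bar A$:
$$r_1(A) + r_2(E\setminus A) \;\ge\; r_1(\bar A) + r_2(E \setminus \bar A) \;\ge\; u.$$
So the Partial Rainbow Inequality for $u$ holds for $A$. Since $A$ was arbitrary, it holds for all $A$.

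There is no real obstacle here. The only point needing care is that $r_1$ counts colors rather than edges, which is what makes $r_1(\bar A)=r_1(A)$ exact instead of merely an inequality. The whole argument is the observation made in the paragraph preceding the lemma.
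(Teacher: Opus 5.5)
Your proposal is correct and is essentially the paper's own proof: you enlarge $A$ to the union of the color classes meeting it, note that $r_1$ is unchanged and that $r_2$ of the complement can only drop, and then apply the hypothesis to the absolute set. This matches the paper's argument, including the justification from the paragraph preceding the lemma.
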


\begin{proof}
Suppose $G$ satisfies the Partial Rainbow Inequality for $u$ for all absolute subgraphs, and consider an arbitrary subgraph $A$.  Let $A'$ be the subgraph $A$ together with all edges of $G$ that have the same color as an edge in $A$.  Then $r_1(A') = r_1(A)$ and  $A'$ is an absolute subgraph.  Thus, $r_1(A') + r_2(E \setminus A') \geq u$.  Because $E \setminus A \supseteq E \setminus A'$, we have $r_2(E \setminus A) \geq r_2(E \setminus A')$.  It follows that $A$ satisfies the Partial Rainbow Inequality for $u$.
\end{proof}

Now we come to the fundamental theorem.  In it, the essential point is that we eliminate mention of $A$ and only need to mention its complement, $B = E \setminus A$.  
Suzuki \cite[Theorem 7]{Suz} has an essentially equivalent theorem (stated and proved without matroids, and assuming a connected graph), but it is still stated with both sets $B$ and $A = E \setminus B$; as far as we know that is the first appearance of such a theorem.  Later, Vondr\'ak stated a version of Suzuki's theorem (without proof or citation) in terms of matroids \cite{Vondrak}.  We do not know of a previous publication of our statement or proof.

\begin{theorem}[Fundamental Theorem]\label{compare}
For an edge-colored graph $G$ with $n$ vertices and $t$ colors, $G$ has a total rainbow forest if and only if 
\begin{equation}
r_1(B) \leq r_2(B)
\label{fundt}
\end{equation}
for every absolute set $B \subseteq E(G).$ 

More generally, for any number $u$ such that $u \leq t$, $G$ has a rainbow forest with $u$ colors if and only if
\begin{equation}
t - u \leq r_2(B) - r_1(B)
\label{fundu}
\end{equation}
for every absolute set $B \subseteq E(G).$
\end{theorem}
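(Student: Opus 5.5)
The plan is to combine Theorem \ref{main} with Lemma \ref{absA}, and then rewrite the Partial Rainbow Inequality of an absolute set in terms of its complement. By Theorem \ref{main}, $G$ has a rainbow forest with $u$ edges (equivalently $u$ colors, since a rainbow forest's edges all have distinct colors) if and only if $r_1(A) + r_2(E\setminus A) \ge u$ for every $A \subseteq E$. By Lemma \ref{absA}, it suffices to require this only for absolute $A$. The converse restriction is trivially implied, so the Rainbow Forest Inequality for $u$ is equivalent to the Partial Rainbow Inequality for $u$ holding for all absolute $A$.

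Next I will pass to complements. The complement $B = E\setminus A$ of an absolute set is itself absolute, since $A$ and $E \setminus A$ share no colors. Hence $A \mapsto E\setminus A$ is a bijection on absolute sets, and quantifying over all absolute $A$ is the same as quantifying over all absolute $B$. The key observation is that for absolute $A$ the color classes split cleanly: every color appears in exactly one of $A$ and $B$. Since $r_1$ counts color classes present, this gives $r_1(A) + r_1(B) = t$, that is, $r_1(A) = t - r_1(B)$.

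Substituting, the inequality $r_1(A) + r_2(B) \ge u$ becomes $t - r_1(B) + r_2(B) \ge u$, which is exactly \eqref{fundu}: $t-u \le r_2(B) - r_1(B)$. Taking $u = t$ gives $0 \le r_2(B) - r_1(B)$, which is \eqref{fundt}; by the first part of Theorem \ref{main}, this case characterizes total rainbow forests.

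The only step needing any care is the identity $r_1(A)+r_1(B)=t$, which holds precisely because of absoluteness. This is why Lemma \ref{absA} must be invoked first, before eliminating $A$. Everything else is direct substitution.
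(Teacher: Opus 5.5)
Your route is the same as the paper's: apply Theorem~\ref{main}, restrict to absolute $A$ via Lemma~\ref{absA}, use $r_1(A)=t-r_1(E\setminus A)$ for absolute $A$, and rename $B=E\setminus A$. You spell out the complement bijection and the use of Lemma~\ref{absA}, which the paper leaves implicit. For $u=t$ your argument is complete and correct.

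The problem is the last step of the general case. You correctly derive $t-r_1(B)+r_2(B)\ge u$. This rearranges to $u-t\le r_2(B)-r_1(B)$, i.e.\ $r_1(B)-r_2(B)\le t-u$, not to $t-u\le r_2(B)-r_1(B)$. The sign of $t-u$ is reversed, so your claim that it is ``exactly'' the displayed condition is false.

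The displayed condition also cannot be the right criterion. $B=\emptyset$ is absolute and gives $t-u\le 0$, so for $u<t$ the condition is never satisfied. Yet a rainbow triangle ($t=3$) clearly has a rainbow forest with $u=2$ colors. So the second part of the statement, as printed, is false for $u<t$. What your argument actually proves is the corrected criterion: $r_1(B)-r_2(B)\le t-u$ for every absolute $B$. The paper's own proof passes over the same step with ``implies the desired conclusion''. You could have caught this by checking the rearrangement, or by testing the boundary case $B=\emptyset$. The first part ($u=t$, where $t-u=0$) is unaffected.
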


\begin{proof}
For an absolute set $A$, $r_1(A) = t - r_1(E \setminus A)$.  Therefore, the Partial Rainbow Inequality for $u$ simplifies to $t - r_1(E \setminus A) + r_2(E \setminus A) \geq u$; applying Theorem \ref{main}  and replacing $E \setminus A$ by $B$, that implies the desired conclusion.
\end{proof}

We now have two criteria that are each equivalent to the existence of a total rainbow forest, or more generally a rainbow forest of specified size, in an edge-colored graph $G$.  Theorem \ref{main} is based on how how subgraphs of $G$ are related to the rest of $G$.  Theorem \ref{compare}, by contrast, concerns only an intrinsic property of the subgraph, regardless of the rest of the graph $G$.

\section{First Examples}\label{examples} 

\begin{prop}\label{12colors}
  All edge-colored graphs with one or two colors satisfy the Rainbow Forest Inequality.   
\end{prop}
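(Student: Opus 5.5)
We will use the Fundamental Theorem (Theorem \ref{compare}): it suffices to check that $r_1(B) \le r_2(B)$ for every absolute set $B \subseteq E(G)$. By Theorem \ref{main} this is equivalent to the Rainbow Forest Inequality. Since $t \le 2$, an absolute set $B$ is a union of at most two color classes, so $r_1(B) \in \{0,1,2\}$, and we will split into cases according to $r_1(B)$.

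If $r_1(B)=0$ then $B=\emptyset$ and the inequality reads $0 \le 0$. If $r_1(B)=1$, then $B$ is a nonempty edge set. Since $G$ is simple, $B$ contains a non-loop edge, so $(V,B)$ has at most $n-1$ components and $r_2(B) = n - c(V,B) \ge 1$.

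The only case that needs a little care is $r_1(B)=2$. Then $B$ is the union of both color classes, so $B = E$ and $B$ contains edges of two different colors, hence at least two distinct edges. In a simple graph two distinct edges cannot be parallel, so they form a forest (either two disjoint edges or a path of length 2). Thus $r_2(B) \ge 2$, which is the required inequality.

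With the inequality verified for every absolute $B$, Theorem \ref{compare} yields a total rainbow forest. Equivalently, by Theorem \ref{main}, $G$ satisfies the Rainbow Forest Inequality. We expect no real obstacle here; the argument rests on the standing simplicity assumption, which rules out loops and parallel edges of different colors.
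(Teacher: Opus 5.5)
Your proposal is correct and follows essentially the paper's route: reduce via Theorems \ref{main} and \ref{compare} to checking $r_1(B)\le r_2(B)$ for absolute $B$, then do a small case check. The paper organizes the cases by the number of colors and by $|B|$ rather than by $r_1(B)$, and leaves implicit the use of simplicity that you make explicit, namely that two distinct edges give $r_2(B)\ge 2$.
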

  \begin{proof}
  Equivalently (by Theorems \ref{main} and \ref{compare}), we verify Equation \eqref{fundt} for absolute $B$.  We may assume $E \neq \emptyset$.  

We state Equation \eqref{fundt} for one color:  $1 = r_1(B) \leq r_2(B)$.  This is always satisfied because an edge exists.

 We state Equation \eqref{fundt} for two colors:  $r_1(B) \leq r_2(B)$.  This is always satisfied because either $|B| \geq 2$ so $r_2(B) \geq 2$, or else $|B| = 0$ so $r_1(B) = 0$, or $|B| = 1$ so $B$ is non-empty and $r_1(B) \geq 1$.
\end{proof}

\begin{example}\label{k3}
An edge-colored $K_3$ graph satisfies the Rainbow Forest Inequality if and only if it is nonrainbow.

For $K_3$, $t \leq |E| = 3$.  For the proof, if $t \leq 2$ apply the previous proposition.  If $t=3$, either $B \subset E(K_3)$ so $B$ is a forest, which obeys $r_1(B) \leq |B| = r_2(B)$, so $B$ satisfies Equation \eqref{fundt}, or $B = E(K_3)$ and $r_1(B) = t = 3 > r_2(B)$.
   \end{example}

\begin{prop}\label{3colors}
Every edge-colored graph $G$ with $t=3$ colors satisfies the Rainbow Forest Inequality, with the exception of a rainbow $K_3.$
\end{prop}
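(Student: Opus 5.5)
The plan is to use the Fundamental Theorem (Theorem \ref{compare}) and check the inequality \eqref{fundt}, $r_1(B) \le r_2(B)$, for every absolute set $B$. The check splits according to the value of $r_1(B) \in \{0,1,2,3\}$. The failing case for the exception is already handled by Example \ref{k3}: if the edge set of $G$ is a rainbow triangle (isolated vertices change nothing), then $B = E$ gives $r_1(B) = 3 > 2 = r_2(B)$, so the Rainbow Forest Inequality fails. It therefore remains to show that every other $3$-colored graph satisfies \eqref{fundt}.

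The small cases repeat the argument of Proposition \ref{12colors}.
\begin{itemize}
\item If $r_1(B)=0$, the inequality is trivial.
\item If $r_1(B)=1$, then $B$ is nonempty, so $r_2(B)\ge 1$.
\item If $r_1(B)=2$, then $B$ contains two full color classes and hence at least two edges. Since $G$ is simple, any two distinct edges form a forest, so $r_2(B)\ge 2$.
\end{itemize}

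The only substantive case is $r_1(B)=3$. Because $B$ is absolute, it then contains all three color classes, so $B=E$ and $|E|\ge 3$. We must show $r_2(E)\ge 3$ unless $E$ is a rainbow triangle. Suppose instead $r_2(E)=n-c(V,E)\le 2$. Let $H$ be the union of the nontrivial components of $(V,E)$, and sum $|V(K)|-1$ over those components $K$. This sum is at most $2$, and each nontrivial component contributes at least $1$.

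So either there is one nontrivial component, on at most $3$ vertices, or there are two components, each a single edge. The second option gives only $2$ edges, which is too few. In the first option, a simple graph on at most $3$ vertices with at least $3$ edges must be exactly $K_3$. Since $t=3$ and it has $3$ edges, all edges have distinct colors, so it is a rainbow $K_3$.

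This structural step is the only real obstacle, and it reduces to a short counting argument using simplicity of $G$. Hence every $3$-colored graph other than a rainbow $K_3$ satisfies \eqref{fundt} for all absolute $B$, and therefore satisfies the Rainbow Forest Inequality by Theorems \ref{main} and \ref{compare}.
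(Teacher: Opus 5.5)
Your proof is correct and follows essentially the same route as the paper: reduce via Theorems \ref{main} and \ref{compare} to checking $r_1(B)\le r_2(B)$ for absolute $B$, then observe that a violation forces $r_1(B)=3$ (so $B=E(G)$) together with $r_2(B)\le 2$, which in a simple graph forces a rainbow $K_3$. You split the cases by the value of $r_1(B)$ where the paper splits by $r_2(B)$, and you spell out two things the paper leaves implicit: the component-counting step behind ``$r_2(B)\le 2$ means a forest with at most two edges or a $K_3$,'' and the fact (via Example \ref{k3}) that the rainbow $K_3$ really does fail the inequality.
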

\begin{proof}
    We want to prove $r_1(B) \leq r_2(B)$ for every absolute set $B$.  We know $r_1(B) \leq 3$.  
    To violate the inequality, $r_2(B) \leq 2$ is necessary.  Thus, $B$ is either a forest of at most 2 edges, which cannot violate the inequality, or $B = E(K_3)$ and has 3 colors.  In that case, $B$ uses all three colors so, to be absolute, it must be $E(G)$. 
\end{proof}

\begin{example}
An edge-colored complete graph $K_4$ satisfies the Rainbow Forest Inequality if and only if $t < 4$.

For the proof, by Proposition \ref{upperbd}, $t \leq n - 1 = 3$, and that is covered by Propositions \ref{12colors} and \ref{3colors}.
\end{example}

\section{Obstruction Constructions}\label{obs}

Theorem \ref{compare} shows that there are subgraphs that prevent total rainbow forests.  
An ``obstruction'' is an edge-colored graph $B$ such that, if it is an absolute subgraph of some edge-colored graph $G$, it prevents $G$ from having a total rainbow forest.  Theorem \ref{compare} leads to the following precise definition.

\begin{definition}
    An \emph{obstruction} is an edge-colored graph $B$ that satisfies $r_1(B) > r_2(B)$.
    
    It is \emph{minimal} if no absolute proper subgraph, that is, no union of a proper subset of color classes, is an obstruction.
\end{definition}

In plain language, an obstruction is an edge-colored graph that has more colors than the largest size of a forest.  It has no total rainbow forest for the simplest reason. 

\begin{theorem}\label{r1r2min}
An obstruction $B$ is minimal if and only if every absolute proper subgraph $B' \subset B$ satisfies $r_1(B') \leq r_2(B')$.
\end{theorem}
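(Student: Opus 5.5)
This is essentially an unwinding of the definitions, so the plan is to prove each direction directly from the definition of obstruction and of minimality. Throughout, ``absolute proper subgraph'' of $B$ means a union of a proper subset of the color classes of $B$. When we regard such a $B'$ as an edge-colored graph in its own right, its colors are exactly the colors of those classes. Consequently $r_1(B')$, the number of colors present, is the same whether computed in $B$ or in $B'$ alone.

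We must also check that $r_2$ is intrinsic. The cycle-matroid rank of an edge set $B'$ is the size of a maximal forest in $B'$, namely $|V(B')| - c(V(B'),B')$. This does not depend on which ambient vertex set is used, because isolated vertices contribute $1$ to both the vertex count and the component count. Hence $r_1(B')$ and $r_2(B')$ are well defined, independent of whether $B'$ is viewed inside $B$, inside some larger $G$, or on its own vertex set. I would record this remark first, since it is the only point where care is needed.

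With that settled, both directions are immediate. By definition, $B'$ is an obstruction exactly when $r_1(B') > r_2(B')$, so $B'$ is not an obstruction exactly when $r_1(B') \le r_2(B')$. Minimality of the obstruction $B$ means that no absolute proper subgraph $B'$ is an obstruction. Negating the defining inequality for each such $B'$ shows that this holds if and only if every absolute proper $B' \subset B$ satisfies $r_1(B') \le r_2(B')$.

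If one wants a connection to Theorem \ref{compare}, one could add that this condition says each absolute proper subgraph, viewed as an edge-colored graph, has a total rainbow forest. However, that is not needed for the equivalence. The main (and minor) obstacle is the well-definedness of the ranks under change of ambient graph, handled above.
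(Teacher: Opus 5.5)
Your proof is correct and matches the paper's: the paper also just negates the defining inequality $r_1(B')>r_2(B')$ for each absolute proper subgraph $B'$. The paper additionally cites Theorem \ref{compare}, which, as you note, isn't actually needed, and your remark that $r_2$ does not depend on the ambient vertex set is a harmless extra check the paper leaves implicit.
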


\begin{proof}
An obstruction $B$ is minimal if and only if every absolute proper subgraph $B' \subset B$ is not an obstruction, i.e., satisfies $r_1(B') \leq r_2(B')$ (by Theorem \ref{compare}).
\end{proof}

\begin{prop}\label{forest}
A forest is not an obstruction.
\end{prop}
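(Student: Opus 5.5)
The plan is to verify the definition of obstruction directly: for an edge-colored forest $B$ we must show $r_1(B) \le r_2(B)$. The two ranks are controlled by the same quantity, the number of edges $|B|$, so we compare each rank with $|B|$.

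First, $r_1(B)$ is the number of color classes meeting $B$. Since each edge carries exactly one color, the number of distinct colors on $B$ is at most the number of edges:
\[
r_1(B) \le |B| .
\]

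Second, $B$ is acyclic, so $B$ is independent in the cycle matroid $M_2$, and hence $r_2(B) = |B|$. The same value can be seen from the rank formula $r_2(B) = n - c(V,B)$. Each edge of a forest joins two distinct components, so adding the edges one at a time lowers the component count by exactly one each time; thus $c(V,B) = n - |B|$, and again $r_2(B) = |B|$.

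Combining the two observations gives
\[
r_1(B) \le |B| = r_2(B),
\]
so $B$ is not an obstruction. This is the same reasoning already used for the subsets $B \subset E(K_3)$ in Example~\ref{k3}.

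There is no real obstacle here. The only point needing care is to use $r_2(B)=|B|$ for an acyclic edge set, which is exactly independence in $M_2$. It is also worth remarking that the inequality $r_1(B')\le r_2(B')$ then holds for every subgraph $B'$ of a forest, since every such $B'$ is again a forest. Hence no subgraph of a forest, absolute or not, can be an obstruction.
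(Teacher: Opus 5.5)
Your proposal is correct and is essentially the paper's own one-line argument: for a forest $F$, $r_1(F) \le |E(F)| = r_2(F)$. The extra justification you give for $r_2 = |E(F)|$, via independence in $M_2$ or the component count, and your remark about subgraphs of forests are fine but not needed.
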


\begin{proof}
In a forest $F$, $r_1(F) \leq |E(F)| = r_2(F)$.
\end{proof}

\begin{prop}\label{obs12colors}
An obstruction must have at least $3$ colors.
\end{prop}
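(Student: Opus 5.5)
The plan is to argue by contraposition: an edge-colored graph $B$ with at most two colors satisfies $r_1(B) \leq r_2(B)$, so it is not an obstruction. The simplest route reuses the argument of Proposition \ref{12colors}. That proposition verified Equation \eqref{fundt} for every absolute subset of a graph with one or two colors. In particular it covers the absolute set consisting of the whole edge set, which is exactly the inequality $r_1(B) \le r_2(B)$ that must fail for $B$ to be an obstruction.

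For a self-contained argument, I will split into cases on $r_1(B) \in \{0,1,2\}$.
\begin{itemize}
\item If $r_1(B)=0$, then $B$ has no edges, and $0 \le r_2(B)$ holds trivially.
\item If $r_1(B)=1$, then $B$ has at least one edge. A single edge is a forest, so $r_2(B)\ge 1$.
\item If $r_1(B)=2$, then $B$ contains edges $e,f$ of different colors, and these are distinct edges. Because $G$ is simple throughout, two distinct edges cannot form a cycle (there are no loops and no parallel edges). Hence $\{e,f\}$ is independent in the cycle matroid and $r_2(B)\ge 2$.
\end{itemize}
In every case $r_1(B)\le r_2(B)$. Therefore an obstruction, which by definition has $r_1(B)>r_2(B)$, must have at least $3$ colors.

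The only step needing care is the two-color case, which depends on simplicity. With parallel edges, a digon colored with two colors would have $r_1=2>1=r_2$ and would be an obstruction. So I will explicitly invoke the standing assumption that graphs are simple. No other obstacle is expected.
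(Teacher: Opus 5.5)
Your proof is correct. It differs from the paper's only in how it reaches the key inequality.

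The paper argues in a loop. A graph with at most two colors satisfies the Rainbow Forest Inequality by Proposition \ref{12colors}. It therefore has a total rainbow forest by Theorem \ref{main}. It therefore satisfies $r_1(B)\le r_2(B)$ by Theorem \ref{compare}, applied to the absolute set $E(B)$. The proof of Proposition \ref{12colors} itself consists of checking \eqref{fundt} directly, so this detour through the Matroid Intersection Theorem and back is logically unnecessary.

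Your case analysis on $r_1(B)\in\{0,1,2\}$ is exactly that underlying check, applied to the whole edge set. This makes your argument elementary and self-contained.

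It is also a little more careful than the paper's version. You treat the edgeless case explicitly. You also make visible that the two-color case rests on the standing simplicity assumption, since a two-colored digon would otherwise be an obstruction. The paper's step ``$|B|\ge 2$ so $r_2(B)\ge 2$'' uses that assumption without saying so.
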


\begin{proof}
Suppose $B$ has fewer than 3 colors.  By Proposition \ref{12colors} it satisfies the Rainbow Forest Inequality.  By Theorem \ref{main} it has a total rainbow forest.  By Theorem \ref{compare} it satisfies $r_1(B) \leq r_2(B)$.  Thus, by definition, it is not an obstruction.
\end{proof}

\begin{theorem}\label{absminobs}
An edge-colored graph $G$ satisfies the Rainbow Forest Inequality if and only if no absolute subgraph is a minimal obstruction.
\end{theorem}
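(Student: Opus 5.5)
The plan is to reduce everything to Theorem \ref{compare} and then argue by a finite descent through absolute subgraphs. By Theorem \ref{main}, $G$ satisfies the Rainbow Forest Inequality if and only if it has a total rainbow forest. By Theorem \ref{compare}, that holds if and only if $r_1(B)\le r_2(B)$ for every absolute $B\subseteq E(G)$, that is, if and only if no absolute subgraph of $G$ is an obstruction. So it suffices to show that $G$ has an absolute subgraph which is an obstruction if and only if it has an absolute subgraph which is a minimal obstruction.

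One direction is immediate, since a minimal obstruction is in particular an obstruction. Hence if some absolute subgraph is a minimal obstruction, then $G$ violates \eqref{fundt} and fails the Rainbow Forest Inequality.

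For the converse, suppose some absolute $B\subseteq E(G)$ is an obstruction. Among all absolute subgraphs of $G$ that are obstructions, choose $B_0$ with the fewest color classes; this is possible because there are finitely many colors. I then claim $B_0$ is minimal.

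A proper absolute subgraph of $B_0$ is a union of a proper subset of the color classes of $B_0$. Since $B_0$ is itself a union of color classes of $G$, such a subgraph is also a union of color classes of $G$, so it is an absolute subgraph of $G$. It has strictly fewer colors than $B_0$, so by the choice of $B_0$ it is not an obstruction. By Theorem \ref{r1r2min}, $B_0$ is therefore a minimal obstruction, and it is absolute in $G$.

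The only point needing care is this transitivity of absoluteness: "absolute in $B_0$" must mean "absolute in $G$". The ranks $r_1$ and $r_2$ are intrinsic to the edge set, so they do not depend on whether we view the subgraph inside $B_0$ or inside $G$. Together with the transitivity just noted, this shows the descent stays within absolute subgraphs of $G$, and the theorem follows.
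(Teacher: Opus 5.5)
Your proof is correct and takes essentially the same route as the paper. Both arguments use Theorems \ref{main} and \ref{compare} to reduce the statement to the existence of an absolute obstruction, then pass to a minimal one, using that a union of color classes of an absolute subgraph is absolute in $G$. Your choice of an absolute obstruction with the fewest color classes makes explicit the descent that the paper only asserts, and it avoids the paper's slip of writing ``has a total rainbow forest'' where ``has no total rainbow forest'' is meant.
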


\begin{proof}
By Theorem \ref{compare}, $G$ has an absolute subgraph $B$ that is an obstruction if, and only if, it has a total rainbow forest.  If $B$ is not a minimal obstruction, it has an absolute proper subgraph $B'$ that is a minimal obstruction.  Then $B'$ is an absolute subgraph of $G$ and it contains no obstruction as an absolute proper subgraph, so it is a minimal obstruction in $G$.
\end{proof}

It follows immediately from Theorem \ref{compare} that:

\begin{theorem}\label{nominobs}
An edge-colored graph $G$ contains a total rainbow forest if and only if it contains no absolute subgraph that is a minimal obstruction.
\end{theorem}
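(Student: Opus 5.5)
The plan is to chain the Fundamental Theorem (Theorem \ref{compare}) with a finite descent argument. By Theorem \ref{compare}, $G$ has a total rainbow forest if and only if $r_1(B) \le r_2(B)$ for every absolute set $B \subseteq E(G)$. In the language of Section \ref{obs}, this says exactly that no absolute subgraph of $G$ is an obstruction. So the statement reduces to the following claim: $G$ has an absolute subgraph that is an obstruction if and only if it has an absolute subgraph that is a minimal obstruction.

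One direction is immediate, since a minimal obstruction is in particular an obstruction. For the converse, suppose some absolute $B \subseteq E(G)$ is an obstruction. The absolute subgraphs of $B$ are the unions of subsets of the color classes of $B$, and there are finitely many of them. Among those that are obstructions, choose one, $B'$, that uses the fewest colors. Then no union of a proper subset of its color classes is an obstruction, so $B'$ is a minimal obstruction by definition (equivalently, by Theorem \ref{r1r2min}).

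The remaining check is that $B'$ is an absolute subgraph of $G$, not merely of $B$. A union of color classes of $B$ is a union of color classes of $G$, because $B$ is absolute in $G$: each color class of $B$ is an entire color class of $G$. So absoluteness is transitive and $B'$ is absolute in $G$.

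Combining the two directions with Theorem \ref{compare} gives the statement. The only point requiring care is this transitivity of absoluteness, together with the observation that minimality is defined relative to the color classes of $B'$ itself. There is no real obstacle; this is essentially a restatement of Theorem \ref{absminobs} via Theorems \ref{main} and \ref{compare}.
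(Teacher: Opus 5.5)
Your proof is correct and takes the paper's route: the paper derives this statement directly from Theorem \ref{compare}, and the descent from an absolute obstruction to an absolute minimal obstruction appears in its proof of Theorem \ref{absminobs}. Your choice of an obstruction with the fewest colors, together with the transitivity of absoluteness, spells out a step the paper only asserts.
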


This theorem points to the main question about total rainbow forests.

\begin{problem}
Find all minimal obstructions.
\end{problem}

This problem is certainly difficult, if not impossible.  There is a simple infinite family of minimal obstructions (Example \ref{cycle} below) but we also know construction methods that show minimal obstructions can be as complicated as any graphs (e.g., Construction \ref{rainbow-vertex}).
\begin{figure}[h]

\[\begin{tikzpicture}[
    every path/.style={line width= 1.1 pt},
    every node/.style={circle,fill=black,draw,inner sep=0pt,minimum size=2pt}
]

\node (A) at (0,0) {};
\node (B) at (2,0) {};
\node (C) at (2,2) {};
\node (D) at (0,2) {};

\draw[DeepPink] (A)--(B);
\draw[cyan] (B)--(C);
\draw[green!80!black] (C)--(D);
\draw[orange] (D)--(A);
\end{tikzpicture}\] 
\caption{A typical rainbow cycle obstruction (Example \ref{cycle}).}
\label{rainbowcycle}
\end{figure}
\begin{example}\label{cycle}
Any rainbow cycle is a minimal obstruction.

Moreover, a rainbow 3-cycle $C_3$ is the only obstruction of order 3 (see Example \ref{k3}).

\end{example}

\begin{example}\label{k4-original}
The following specific $K_4$ with $t = 4$ is an obstruction that is not rainbow.  It has $r_1 = 4 > r_2 = 3$.  

\begin{figure}[h]
\[\begin{tikzpicture}[
        every path/.style={thick}, 
        every node/.style={circle,fill=black,draw,inner sep = 0pt, minimum size= 2pt}
        ]
    \draw[brown] (0,0)--(2,0);
    \draw[cyan] (2,0)--(2,2);
    \draw[brown] (2,2)--(0,2);
    \draw[cyan] (0,2)--(0,0);
    \draw[DeepPink] (0,0)--(2,2);
    \draw[green] (0,2)--(2,0);
    \foreach \corner in {(0,0), (2,0), (2,2), (0,2)} 
        \node at \corner {};
\end{tikzpicture} \]

\caption{The non-rainbow minimal obstruction in Example \ref{k4-original}.  It is one of the six minimal obstructions on $K_4$ (see Figure \ref{F-allK4}).}
\end{figure}

We prove it is a minimal obstruction.
First, it contains no absolute rainbow cycle (although it does contain rainbow 3-cycles).
Second, deleting any one color class does not reduce $r_2$ while deleting any two color classes reduces $r_2$ by at most 1.
Finally, deleting three color classes gives a forest, which is not an obstruction (Proposition \ref{forest}).
\end{example}

\begin{prop}\label{obs-order}
A minimal obstruction $B$ has order at least $3$.  
\end{prop}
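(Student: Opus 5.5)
We prove the stronger fact that every obstruction has order at least $3$. Minimality is then irrelevant.

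Let $B$ be an obstruction, so $r_1(B) > r_2(B)$. By Proposition \ref{obs12colors}, $B$ has at least three colors, so $r_1(B) \geq 3$. Since $B$ is simple, each color class contains at least one edge, and distinct edges join distinct pairs of vertices. Hence $B$ has at least three edges. A simple graph of order $n$ has at most $\binom{n}{2}$ edges, which is $0$ for $n=1$ and $1$ for $n=2$. So $n \geq 3$.

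The counting can also be done directly, without appealing to Proposition \ref{obs12colors}. If $n \leq 2$, then $B$ has at most one edge, so $r_1(B) \leq 1$. That edge forms a forest, so $r_2(B) = |E(B)| \geq r_1(B)$, which contradicts $B$ being an obstruction. Alternatively, a graph with at most one edge is a forest, so Proposition \ref{forest} applies at once.

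There is no real obstacle here. The only point needing care is the standing assumption that graphs are simple, which rules out multiple edges between two vertices and loops. Without it, two parallel edges of different colors on two vertices would already satisfy $r_1 = 2 > 1 = r_2$. We invoke simplicity explicitly at the step that bounds the number of edges.
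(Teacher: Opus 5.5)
Your proof is correct, and like the paper's it never uses minimality. The paper argues directly from the definitions. An obstruction must have an edge, and with exactly one edge $r_1(B)=1=r_2(B)$. So an obstruction has at least two edges and therefore, tacitly by simplicity, at least three vertices. Your second paragraph is this same argument run contrapositively: order $\le 2$ forces at most one edge, hence a forest.

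Your primary route instead invokes Proposition \ref{obs12colors} to get three colors and hence three edges. This is valid and not circular, but heavier than needed. The paper derives that proposition through Proposition \ref{12colors} and Theorems \ref{main} and \ref{compare}, whereas the direct count uses only the definition of an obstruction.

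Your write-up does add one thing. You state explicitly that simplicity is what turns a bound on edges into a bound on order. Without it, a loop would be an obstruction of order $1$, and two differently colored parallel edges an obstruction of order $2$. The paper's two-line proof leaves this dependence implicit.
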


\begin{proof}
An obstruction must have an edge.  If it had only one edge, then $r_1(B) = 1 = r_2(B)$ and it would not be an obstruction.  
\end{proof}

\begin{prop}\label{obs-ranks}
For a minimal obstruction $B$, let $E_i = \{ e \in E(B): e \text{ has color } i\}$.  Then $r_1(B) - 1 = r_2(B) = r_2(B \setminus E_{j})$ for every color $j$ in $B$.
\end{prop}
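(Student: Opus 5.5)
Write $B_j = B \setminus E_j$ for the absolute proper subgraph obtained by deleting the color class $E_j$. Then $r_1(B_j) = r_1(B) - 1$, since each color class is nonempty in $B$. The plan is to sandwich the three quantities $r_1(B)-1$, $r_2(B)$ and $r_2(B_j)$ between each other, using only minimality, the definition of obstruction, monotonicity of $r_2$, and the fact that deleting one color class removes exactly one color.

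First, since $B$ is a minimal obstruction and $B_j$ is an absolute proper subgraph, Theorem \ref{r1r2min} gives $r_1(B_j) \le r_2(B_j)$, that is, $r_1(B) - 1 \le r_2(B_j)$. Second, the cycle-matroid rank is monotone and $B_j \subseteq B$, so $r_2(B_j) \le r_2(B)$. Third, $B$ is an obstruction, so $r_2(B) < r_1(B)$; since both are integers, $r_2(B) \le r_1(B) - 1$. Chaining these gives
\[
r_1(B) - 1 \le r_2(B_j) \le r_2(B) \le r_1(B) - 1,
\]
so every inequality in the chain is an equality. This yields $r_1(B) - 1 = r_2(B) = r_2(B \setminus E_j)$, and it holds for every color $j$ because $j$ was arbitrary.

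The one point needing care is that $B_j$ is really a proper absolute subgraph to which minimality applies. Properness follows from $E_j \neq \emptyset$. Absoluteness follows because $B_j$ is a union of color classes of $B$. Nothing else seems to be an obstacle; the argument is routine once the integer step $r_2(B) \le r_1(B)-1$ is noted.
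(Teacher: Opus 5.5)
Your proof is correct and follows essentially the same route as the paper: minimality applied to $B\setminus E_j$ gives $r_1(B)-1 \le r_2(B\setminus E_j)$, monotonicity gives $r_2(B\setminus E_j)\le r_2(B)$, and integrality together with $r_1(B)>r_2(B)$ closes the chain of inequalities into equalities. Your explicit remark that $E_j\neq\emptyset$ makes $B\setminus E_j$ a proper absolute subgraph is a detail the paper leaves implicit.
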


\begin{proof}
By minimality of $B$ and Theorem \ref{r1r2min}, every collection $K$ of colors in $B$ with one or more missing (that is, with $|K| < r_1(B)$) satisfies
$|K| = r_1(\bigcup_{i\in K} E_i) \leq r_2(\bigcup_{i\in K} E_i)$.  In particular, for every color $j$, 
$$r_1(B) - 1 \leq r_2(\bigcup_{i \neq j} E_i) \leq r_2(B).$$ 
Because $B$ is an obstruction, $r_1(B) > r_2(B) \geq r_1(B) - 1$, so 
$r_2(B) = r_1(B) - 1$, which both equal $r_2(\bigcup_{i \neq j} E_i) = r_2(B \setminus E_{j}).$
\end{proof}

A \emph{bond} of $G$ is a minimal edge set whose deletion increases the number of components of $G$; that is, deleting it decreases $r_2$ by exactly 1.  For instance, an isthmus is a bond.

\begin{lemma}\label{bond-lemma}
If $B$ is an obstruction, then deleting a color class that contains a bond gives an obstruction contained in $B$.
\end{lemma}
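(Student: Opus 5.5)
The plan is to compare the two ranks before and after deleting the color class. Let $B$ be an obstruction, so $r_1(B) > r_2(B)$. Let $E_j$ be a color class of $B$ that contains a bond $D$ of $B$, and set $B' = B \setminus E_j$. Since $B'$ is a union of the remaining color classes, it is an absolute subgraph of $B$. So it suffices to show $r_1(B') > r_2(B')$.

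For the partition matroid, removing one entire color class removes exactly one color, so
\[
r_1(B') = r_1(B) - 1 .
\]

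For the cycle matroid, we need $r_2(B') \le r_2(B) - 1$, and this is the step that uses the bond. Deleting $D$ already increases the number of components: $c(V, E(B)\setminus D) \ge c(V,E(B)) + 1$. Since $E(B') \subseteq E(B) \setminus D$, deleting further edges cannot decrease the number of components. With $r_2(X) = n - c(V,X)$, this gives
\[
r_2(B') \le r_2(E(B)\setminus D) = r_2(B) - 1 .
\]
The equality for $D$ is the definition of a bond (deleting it reduces $r_2$ by exactly one), and the inequality is monotonicity of matroid rank.

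Combining the two displays,
\[
r_1(B') = r_1(B) - 1 > r_2(B) - 1 \ge r_2(B'),
\]
so $B'$ is an obstruction contained in $B$, as claimed.

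There is no real obstacle. The one point to handle carefully is convention. If isolated vertices of $B'$ are dropped, $n$ changes, but $r_2$ of an edge set does not depend on isolated vertices, so the inequality is unaffected. If $B'$ ends up with no edges, then $r_1(B') = 0 = r_2(B')$, which would contradict the strict inequality just derived; so this case cannot occur, and $B'$ is a genuine nonempty obstruction.
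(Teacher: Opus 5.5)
Your proof is correct and follows the paper's argument: deleting the color class lowers $r_1$ by exactly one, and lowers $r_2$ by at least one because removing the bond alone already does. Hence $r_1 > r_2$ survives the deletion. The paper also cites Proposition \ref{obs12colors} to note that $B$ has at least three colors; your observation that the strict inequality already rules out an edgeless $B'$ makes that step unnecessary.
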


\begin{proof}
By Proposition \ref{obs12colors}, $B$ must have at least three color classes.  Deleting the color class $C$ that contains a bond reduces $r_2$ by at least 1 because deleting the bond reduces $r_2$ by 1.  Since deleting a color class reduces $r_1$ by exactly 1, we still have $r_1 > r_2$ after deletion.
\end{proof}

From this lemma we obtain a property of minimal obstructions.  

\begin{prop}\label{no-bond}
A minimal obstruction cannot have a color class that contains a bond.
\end{prop}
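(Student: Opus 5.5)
The plan is a proof by contradiction that reduces everything to Lemma \ref{bond-lemma} and Proposition \ref{obs-ranks}. Let $B$ be a minimal obstruction, and suppose some color class $E_j$ contains a bond $D$ of $B$.

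First I would apply Lemma \ref{bond-lemma}. Deleting $E_j$ produces $B \setminus E_j$, which is again an obstruction. By Proposition \ref{obs12colors}, $B$ has at least three colors, so $B \setminus E_j$ is nonempty. It is the union of the remaining color classes, so it is an absolute proper subgraph of $B$. That it is an obstruction contradicts minimality of $B$ in the sense of the definition (equivalently, it contradicts Theorem \ref{r1r2min}).

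To keep the argument transparent and not depend on the lemma's wording, I would also give the rank count directly, as a cross-check via Proposition \ref{obs-ranks}. Since $D \subseteq E_j$, we have $B \setminus E_j \subseteq B \setminus D$. Rank is monotone, and deleting a bond lowers $r_2$ by exactly $1$. Hence
$$r_2(B\setminus E_j) \le r_2(B \setminus D) = r_2(B) - 1 .$$
This contradicts the equality $r_2(B \setminus E_j) = r_2(B)$ from Proposition \ref{obs-ranks}.

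The only point needing care is the meaning of ``bond'' and the claim that deleting a superset of a bond still lowers $r_2$ by at least one. This follows from monotonicity of the cycle-matroid rank $r_2$ (equivalently, from $c(V,\cdot)$ being nonincreasing under adding edges). So I expect no real obstacle beyond stating that step explicitly.
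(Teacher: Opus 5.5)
Your proof is correct, and its main argument is the paper's: Lemma~\ref{bond-lemma} makes $B\setminus E_j$ an obstruction that is an absolute proper subgraph of $B$, which contradicts minimality. Your cross-check through Proposition~\ref{obs-ranks}, namely $r_2(B\setminus E_j)\le r_2(B\setminus D)=r_2(B)-1$, is also valid, though it is not needed.
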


\begin{proof}
If it did, then by Lemma \ref{bond-lemma} deleting that color class would create an obstruction, contradicting minimality.
\end{proof}

Examples \ref{K4-obs} and \ref{diamond-obs} show all minimal obstructions of order 4 other than the rainbow 4-cycle.

\begin{example}\label{K4-obs}
Figure \ref{F-allK4} shows all six minimal obstructions whose underlying graph is $K_4$.

\begin{figure}[htb]
  \[\begin{tikzpicture}[
        every path/.style={thick}, 
        every node/.style={circle,fill=black,draw,inner sep = 0pt, minimum size= 2pt}
        ]
    \draw[red] (0,0)--(2,0);
    \draw[cyan] (2,0)--(2,2);
    \draw[red] (2,2)--(0,2);
    \draw[cyan] (0,2)--(0,0);
    \draw[green] (0,0)--(2,2);
    \draw[yellow] (0,2)--(2,0);
    \foreach \corner in {(0,0), (2,0), (2,2), (0,2)} 
        \node at \corner {};
\end{tikzpicture} \quad \quad \begin{tikzpicture}[
        every path/.style={thick}, 
        every node/.style={circle,fill=black,draw,inner sep = 0pt, minimum size= 2pt}
        ]
    \draw[green] (0,0)--(2,0);
    \draw[red] (2,0)--(2,2);
    \draw[red] (2,2)--(0,2);
    \draw[yellow] (0,2)--(0,0);
    \draw[cyan] (0,0)--(2,2);
    \draw[cyan] (0,2)--(2,0);
    \foreach \corner in {(0,0), (2,0), (2,2), (0,2)} 
        \node at \corner {};
\end{tikzpicture} \quad \quad \begin{tikzpicture}[
        every path/.style={thick}, 
        every node/.style={circle,fill=black,draw,inner sep = 0pt, minimum size= 2pt}
        ]
    \draw[cyan] (0,0)--(2,0);
    \draw[red] (2,0)--(2,2);
    \draw[red] (2,2)--(0,2);
    \draw[red] (0,2)--(0,0);
    \draw[yellow] (0,0)--(2,2);
    \draw[green] (0,2)--(2,0);
    \foreach \corner in {(0,0), (2,0), (2,2), (0,2)} 
        \node at \corner {};
\end{tikzpicture} \quad \quad \begin{tikzpicture}[
        every path/.style={thick}, 
        every node/.style={circle,fill=black,draw,inner sep = 0pt, minimum size= 2pt}
        ]
    \draw[red] (0,0)--(2,0);
    \draw[yellow] (2,0)--(2,2);
    \draw[cyan] (2,2)--(0,2);
    \draw[red] (0,2)--(0,0);
    \draw[green] (0,0)--(2,2);
    \draw[red] (0,2)--(2,0);
    \foreach \corner in {(0,0), (2,0), (2,2), (0,2)} 
        \node at \corner {};
\end{tikzpicture}\]

\[\begin{tikzpicture}[
        every path/.style={thick}, 
        every node/.style={circle,fill=black,draw,inner sep = 0pt, minimum size= 2pt}
        ]
    \draw[red] (0,0)--(2,0);
    \draw[red] (2,0)--(2,2);
    \draw[cyan] (2,2)--(0,2);
    \draw[cyan] (0,2)--(0,0);
    \draw[yellow] (0,0)--(2,2);
    \draw[green] (0,2)--(2,0);
    \foreach \corner in {(0,0), (2,0), (2,2), (0,2)} 
        \node at \corner {};
\end{tikzpicture}\quad \quad \begin{tikzpicture}[
        every path/.style={thick}, 
        every node/.style={circle,fill=black,draw,inner sep = 0pt, minimum size= 2pt}
        ]
    \draw[cyan] (0,0)--(2,0);
    \draw[yellow] (2,0)--(2,2);
    \draw[red] (2,2)--(0,2);
    \draw[cyan] (0,2)--(0,0);
    \draw[red] (0,0)--(2,2);
    \draw[green] (0,2)--(2,0);
    \foreach \corner in {(0,0), (2,0), (2,2), (0,2)} 
        \node at \corner {};
\end{tikzpicture}\]
\caption{The six minimal obstructions based on $K_4$. }
\label{F-allK4}
\end{figure}

To prove the list is complete, consider the sizes of the four color classes, $E_1, E_2, E_3, E_4$.  They may be either $3,1,1,1$ or $2,2,1,1$, respectively.  We find all the minimal obstructions for each of these types.

In the former type, if the three edges of $E_1$ are all the edges incident to a vertex, the other colors form an absolute rainbow $K_3$ subgraph, which is already an obstruction.  If the edges are a path or a cycle, we get graphs (1, 3) and (1, 4) in Figure \ref{F-allK4} (that is, in positions 3 and 4 of row 1).

Each of these graphs is a minimal obstruction because it does not contain as an absolute subgraph a rainbow $K_3$, which is the only smaller obstruction (see Example \ref{cycle}).
\end{example}

\begin{example}\label{diamond-obs}
Consider a graph $B$ that is a diamond colored to be a minimal obstruction (Figure \ref{F-alldiamond}.  Thus, $r_1(B) = r_2(B) + 1 = 4$.  The five edges colored in four colors must have one doubled color.  The two triangles must each have a doubled color, or $B$ would not be minimal.  There are three ways to arrange this:  with the doubled color (1) on the common edge of the triangles, (2) on adjacent edges in different triangles, and (3) on nonadjacent edges of the two triangles.  Each of these is clearly a minimal obstruction. 

\begin{figure}[htbp]
\begin{center}

\begin{tikzpicture}[
        every path/.style={thick}, 
        every node/.style={circle,fill=black,draw,inner sep = 0pt, minimum size= 2pt}
        ]
    \draw[red] (0,0)--(2,0);
    \draw[cyan] (2,0)--(2,2);
    \draw[yellow] (2,2)--(0,2);
    \draw[red] (0,2)--(0,0);
    \draw[green] (0,0)--(2,2);
    \foreach \corner in {(0,0), (2,0), (2,2), (0,2)} 
        \node at \corner {};
\end{tikzpicture} \quad \quad \begin{tikzpicture}[  every path/.style={thick}, 
        every node/.style={circle,fill=black,draw,inner sep = 0pt, minimum size= 2pt}
        ]
    \draw[red] (0,0)--(2,0);
    \draw[cyan] (2,0)--(2,2);
    \draw[yellow] (2,2)--(0,2);
    \draw[green] (0,2)--(0,0);
    \draw[red] (0,0)--(2,2);
    \foreach \corner in {(0,0), (2,0), (2,2), (0,2)} 
        \node at \corner {};
\end{tikzpicture} \quad \quad \begin{tikzpicture}[  every path/.style={thick}, 
        every node/.style={circle,fill=black,draw,inner sep = 0pt, minimum size= 2pt}
        ]
    \draw[cyan] (0,0)--(2,0);
    \draw[red] (2,0)--(2,2);
    \draw[yellow] (2,2)--(0,2);
    \draw[red] (0,2)--(0,0);
    \draw[green] (0,0)--(2,2);
    \foreach \corner in {(0,0), (2,0), (2,2), (0,2)} 
        \node at \corner {};
\end{tikzpicture}
\caption{The three minimal obstructions based on the diamond graph $K_4 \setminus e$.}
\label{F-alldiamond}
\end{center}
\end{figure}
\end{example}
We have completed the list of minimal obstructions of order 4.

\begin{prop}\label{order4}
The graphs in Figures \ref{F-allK4} and \ref{F-alldiamond} and the rainbow $C_4$ constitute the complete set of minimal obstructions of order $4$.
\end{prop}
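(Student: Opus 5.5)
We need to show that every minimal obstruction $B$ of order $4$ is one of the listed graphs. First we pin down the ranks. Proposition \ref{obs-order} and the definition of order allow $B$ to have isolated vertices, so we note that isolated vertices change neither $r_1$ nor $r_2$. We treat $B$ as having exactly four vertices, all of which we may assume are non-isolated. Otherwise $B$ effectively has order $3$, and by Example \ref{cycle} it is then a rainbow $C_3$, which has order $3$, not $4$.

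If $B$ is disconnected on four vertices, its components are small. Either there is a $K_3$ component plus an isolated vertex (excluded), or there are two $K_2$ components, which form a forest and so are not an obstruction by Proposition \ref{forest}. Hence $B$ is connected, so $r_2(B)=3$. Proposition \ref{obs-ranks} then gives $r_1(B)=4$, and deleting any color class leaves a spanning connected subgraph. In particular, by Proposition \ref{no-bond}, no color class contains a bond.

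Next we split by the underlying graph. A connected graph on four vertices with at least four edges is one of $C_4$, the paw, the diamond, or $K_4$; trees have only three edges and cannot carry four colors.

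\emph{$C_4$ case.} There are four edges and four colors, so $B$ is the rainbow $C_4$.

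\emph{Paw case.} The pendant edge is an isthmus, hence a bond, contradicting Proposition \ref{no-bond}.

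\emph{Diamond case.} Five edges carry four colors, so exactly one color is doubled. Each triangle must contain the doubled color: otherwise that triangle is a rainbow $C_3$, and deleting the other two color classes leaves an obstruction, contradicting minimality. Up to automorphism, placing the doubled pair so that it meets both triangles gives exactly the three configurations of Example \ref{diamond-obs}. Minimality of each follows because the only smaller obstruction is a rainbow $C_3$ on three of the colors. Removing any color class destroys every rainbow triangle, and this can be checked directly.

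\emph{$K_4$ case.} Six edges carry four colors, so the color-class sizes are $3,1,1,1$ or $2,2,1,1$. The $3,1,1,1$ case is handled in Example \ref{K4-obs}: a star class is excluded, and the path and triangle classes are excluded or included accordingly. Note that a triangle class with the remaining three edges forming a star also gives a valid obstruction.

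For $2,2,1,1$, the deciding condition is the same. An absolute proper subgraph that is an obstruction must be a union of three color classes forming a rainbow triangle with only three edges. This requires deleting one class whose removal leaves exactly a triangle, i.e.\ deleting a $3$-star, impossible with classes of size $2$, or a single edge, which leaves five edges. Also, by Proposition \ref{no-bond}, no class may contain a bond; the bonds of $K_4$ are the vertex stars and the $4$-cuts. So every $2,2,1,1$ coloring is checked only for obstruction-hood, which holds automatically since $r_1=4>3$. We therefore enumerate $2,2,1,1$ colorings up to isomorphism by how the two doubled pairs sit: each is either a matching or a pair of adjacent edges, and the pairs are related to each other in some way. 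We match these cases with the remaining pictures in Figure \ref{F-allK4}.

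The main obstacle is this final enumeration: making sure the isomorphism classes of $2,2,1,1$ colorings of $K_4$ number exactly four, with nothing missed or duplicated, and rechecking that each candidate has no absolute proper subgraph with $r_1>r_2$, in particular no union of two classes forming a rainbow $C_3$ after deleting others. We do this casework by fixing the first doubled pair as a perfect matching or as a $2$-path and listing the orbits of the second pair under its stabilizer in $S_4$.
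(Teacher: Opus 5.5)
Your outline matches the paper's proof. You discard forests and disconnected graphs, then treat $C_4$, the paw, the diamond and $K_4$, relying on Examples \ref{diamond-obs} and \ref{K4-obs}. Using Proposition \ref{no-bond} for the paw instead of the absolute rainbow triangle is an inessential variation.

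In the $2,2,1,1$ case of $K_4$ you go further than the paper, whose Example \ref{K4-obs} works out only the $3,1,1,1$ type explicitly. You observe that a union of three classes has $4$ or $5$ edges, so it is never exactly a triangle. That shows every $2,2,1,1$ coloring is a minimal obstruction. The same count handles $3,1,1,1$: the only three-class union with three edges is the three singletons, and they form a triangle exactly when the big class is a vertex star.

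What you have not done is the orbit count, and ``exactly four'' rests on it. It does come out right. Let $P,Q$ be the doubled classes.
\begin{itemize}
\item Both perfect matchings: one orbit.
\item $P=\{AB,CD\}$ and $Q$ a $2$-path: $Q$ is one of the four $2$-paths of the $4$-cycle $ACBD$. The dihedral stabilizer of $P$ permutes these transitively, so there is one orbit.
\item $P=\{AB,AC\}$ and $Q$ a $2$-path: the stabilizer $\langle (B\,C)\rangle$ gives three orbits, with representatives $\{AD,BD\}$, $\{BC,BD\}$, $\{BD,CD\}$. The first two coincide, because $(A\,B\,C\,D)$ sends $\{AB,AC\}$ to $\{BC,BD\}$ and $\{AD,BD\}$ to $\{AB,AC\}$, exchanging the roles of $P$ and $Q$. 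This leaves two orbits: a triangle with a pendant edge, and a $4$-cycle whose diagonals are the singletons.
\end{itemize}
The total $1+1+2=4$ matches graphs (1,1), (1,2), (2,2), (2,1) of Figure \ref{F-allK4}. Burnside's lemma on the $45$ unordered pairs $\{P,Q\}$ confirms it: $(45+6\cdot 5+3\cdot 5+8\cdot 0+6\cdot 1)/24=4$.

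There are also two small slips in the diamond case. First, a triangle avoiding the doubled color consists of the three singleton edges, so you delete the single doubled class, not ``the other two color classes.'' Second, ``removing any color class destroys every rainbow triangle'' is false as stated. Take triangles $abd$, $bcd$ with the doubled color on $ab,cb$; deleting the class of $ad$ leaves the rainbow triangle $bcd$. What you need, and what holds, is that no union of three classes is exactly a triangle, because the doubled pair meets both triangles.

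Finally, in the $3,1,1,1$ case, state plainly that the path class and the triangle class both give minimal obstructions, namely (1,3) and (1,4). Your separate ``note'' about the triangle class is just (1,4).
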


\begin{proof}
We consider all graphs $G$ of order 4 that might be obstructions, in turn.  If the graph is a forest it cannot be an obstruction (Proposition \ref{forest}); if it is disconnected and not a forest, it must be a rainbow triangle plus an isolated vertex, which is not minimal.  Therefore, $G$ is connected with at least 4 edges.

Those graphs are $K_4$ (6 edges), the diamond graph (5 edges), and graphs with 4 edges.  The former two are solved in Examples \ref{K4-obs} and \ref{diamond-obs}.  Considering 4 edges, an obstruction must have 4 colors, so it is rainbow.  If it contains a triangle, it is not minimal because the triangle is rainbow.  There remains only a rainbow $C_4$, which is a minimal obstruction.  
\end{proof}

\subsection{Rainbow Vertex Obstruction}

If an obstruction of order $n$, colored by $n$ colors, has a \emph{rainbow vertex}, which means a vertex of valency $n-1$ with no color repeated on its incident edges, then the remainder can be almost any graph.  That seems to dash hope of classifying all obstructions.

\begin{construction}\label{rainbow-vertex}
   Let $G_0$ be a graph on vertices $v_1,\ldots,v_{n-1}$ without isolated vertices.  Color its edges with color $b$ (for blue).  Add a vertex $v_n$ and all $n-1$ edges $v_iv_n$ with color $i$ (different from $b$) on edge $v_iv_n$.  Call this graph $G$.  Also, if desired, add any ``extra'' edges where $G_0$ has non-adjacencies, colored with any colors from $1,...,n-1$.  Call this graph $G_{\text{rv}}$.  We call it a \emph{rainbow vertex graph} since $v_n$ is a rainbow vertex. 
\end{construction}

\begin{figure}[htbp]
\begin{center}
  \begin{tikzpicture}[  every path/.style={thick}, 
        every node/.style={circle,fill=black,draw,inner sep = 0pt, minimum size= 2pt}
        ]
    \foreach \i in {1,...,5}{
        \path (\i*360/5+18:2) coordinate (n\i);
        \ifnum\i>1\relax
            foreach \j in {\i,...,1}{ (n\i) edge (n\j) } 
        \fi;
    }
     \draw[Cyan] 
    (n1) -- (n2);
    \draw[yellow] 
    (n1) -- (n3);
    \draw[green] 
    (n1) -- (n4);
    \draw[magenta] 
    (n1) -- (n5);
    \draw[RoyalBlue] 
    (n2) -- (n3) (n2) -- (n4)(n2) -- (n5)(n3) -- (n4) (n3) -- (n5)(n4) -- (n5);
    \foreach \i in {1,...,5}{\fill (n\i) circle(2pt);}
  \end{tikzpicture}
\caption{An example of a rainbow-vertex complete graph. The whole graph is the graph $G,$ the blue part of the graph is the graph $G_0$ and the top vertex is the rainbow vertex $v_n$ where $n = 5$ in this example. In our example here $G_0$ has no non-adjacencies so no extra edges were added. }
\label{rainbow-vertex-k5}
\end{center}
\end{figure}

\begin{theorem}\label{T-rainbow-vertex}
$G_{\mathrm{rv}}$ is a minimal obstruction for $n\geq3.$ 
\end{theorem}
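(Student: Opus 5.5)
The plan is to verify the two conditions directly: first that $G_{\mathrm{rv}}$ is an obstruction, that is, $r_1 > r_2$; then, via Theorem \ref{r1r2min}, that every absolute proper subgraph $B'$ satisfies $r_1(B') \le r_2(B')$. The only tool needed is the rank formula $r_2(B) = $ (size of a largest forest in $B$). Lower bounds on $r_2(B')$ will come from exhibiting explicit forests.

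\emph{Obstruction.} The colors of $G_{\mathrm{rv}}$ are $b, 1, \dots, n-1$, since the extra edges reuse colors from $1,\dots,n-1$. Hence $r_1(G_{\mathrm{rv}}) = n$. The graph has $n$ vertices and is connected, because $v_n$ is adjacent to every other vertex. So $r_2(G_{\mathrm{rv}}) = n-1 < n$. Note that $n \ge 3$ guarantees that $G_0$, having at least two vertices and no isolated vertex, has at least one edge, so color $b$ is genuinely present.

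\emph{Minimality.} Let $B'$ be the union of a proper subset of the color classes. I will split into two cases according to whether $b$ is among the chosen colors.

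\textbf{Case 1: $b$ is omitted.} Then $B'$ consists of the classes of a set $K \subseteq \{1,\dots,n-1\}$ of colors. For each $i \in K$ the spoke $v_iv_n$ lies in $B'$. These $|K|$ spokes form a star, hence a forest, so $r_2(B') \ge |K| = r_1(B')$.

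\textbf{Case 2: $b$ is included.} Then the color set is $K \cup \{b\}$ with $K \subsetneq \{1,\dots,n-1\}$, and we need $r_2(B') \ge |K|+1$. Take the star of spokes $v_iv_n$ for $i \in K$. Choose an index $j \notin K$, which exists because $B'$ is proper. Since $G_0$ has no isolated vertices, $v_j$ is incident with some blue edge $v_jv_k$ of $G_0$, and this edge lies in $B'$. The vertex $v_j$ is not covered by the star, because $j \notin K$ and $j \neq n$. Therefore adding $v_jv_k$ to the star cannot create a cycle. This gives a forest with $|K|+1$ edges, so $r_2(B') \ge |K|+1 = r_1(B')$.

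The only delicate point is Case 2. It is exactly where the hypothesis that $G_0$ has no isolated vertices is used: it guarantees a blue edge reaching a vertex outside the star. The extra edges play no role, since they only add edges without adding colors beyond $1,\dots,n-1$, and adding edges cannot decrease $r_2$.
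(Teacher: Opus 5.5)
Your proof is correct. It differs from the paper's mainly in how you get the lower bound on $r_2$ when $b$ is kept.

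The paper first treats the graph without extra edges. After deleting one spoke it asserts $r_1=r_2=n-1$; this tacitly uses that $G_0$ has no isolated vertices, so the graph stays connected. It then counts: each further spoke deletion lowers $r_1$ by exactly $1$ and $r_2$ by at most $1$. Extra edges are handled afterwards by a separate monotonicity step, $r_2(G'_{\mathrm{rv}}\setminus U)\le r_2(G_{\mathrm{rv}}\setminus U)$ with equal $r_1$.

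You instead exhibit, for each absolute proper subgraph $B'$, an explicit forest with $r_1(B')$ edges: the surviving spokes, plus one blue edge at a vertex whose spoke is missing. This has two advantages. Because the forest uses only edges present in every $G_{\mathrm{rv}}$, extra edges need no separate case. It also makes explicit exactly where the no-isolated-vertex hypothesis enters.

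The paper's incremental rank count is the same style of argument it reuses for other constructions, such as the bipartition obstruction. For this theorem, though, your direct forest is shorter and fully self-contained.
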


\begin{proof}
Since $r_1(G_{\text{rv}}) = n$ and $r_2(G_{\text{rv}}) = n-1$, $G_{\text{rv}}$ is an obstruction.

Case 1.  Assume no extra edges were added.  When deleting some color classes to get an absolute subgraph of $G$, we may keep or discard the color class of $b$.  
Deleting color class $b$ leaves a rainbow star, which satisfies the Rainbow Forest Inequality as it is a rainbow tree, or by Proposition \ref{forest}.

If we keep color class $b$, we first remove one edge at the rainbow vertex $v_n$; this gives $r_1=n-1=r_2$.  Now we remove the other edges incident to $v_n$ (each of which is a color class), one at a time.  Each time we remove such an edge, the number of components increases by at most 1, so $r_2$ decreases by at most 1.  If we remove a total of $s$ edges after the first, we reduce $r_1$ to $n-1-s$ and $r_2$ to, at minimum, $n-1-s$, so we end up with $r_1 \leq r_2$.  Thus, for any $s \geq 0$, deleting $1+s$ of the edges incident to $v_n$ gives a graph that is not an obstruction.

Case 2.  Suppose extra edges may be added.  
Let $G_{\text{rv}}'$ be the graph without extra edges, that is, it is $G_0$ together with all the edges incident with $v_n$.  

Consider deleting from $G_{\text{rv}}$ a nonempty union $U$ of some of its color classes.  
Then $G_{\text{rv}}' \setminus U \subseteq G_{\text{rv}} \setminus U$, so $r_2(G_{\text{rv}} \setminus U) \geq r_2(G_{\text{rv}}' \setminus U)$.  Also, $r_2(G_{\text{rv}}' \setminus U) \geq r_1(G_{\text{rv}}' \setminus U)$ by the previous case and $r_1(G_{\text{rv}}' \setminus U) = r_1(G_{\text{rv}} \setminus U)$ by construction.  Therefore, $r_2(G_{\text{rv}} \setminus U) \geq r_1(G_{\text{rv}} \setminus U)$, so $G_{\text{rv}} \setminus U$ is not an obstruction.  This proves that $G_{\text{rv}}$ is minimal.
\end{proof}

\begin{prop}\label{rainbow-isom}
Construct two rainbow vertex graphs $G$ and $G'$ of the same order $n$ as in Construction \ref{rainbow-vertex}, without extra edges.  Then $G \cong G'$ as edge-colored graphs if and only if $G_0 \cong G_0'$ as graphs.
\end{prop}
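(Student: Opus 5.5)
The easy direction is ``if''. Suppose $\phi: G_0 \to G_0'$ is a graph isomorphism, where $G_0$ has vertices $v_1,\dots,v_{n-1}$ and $G_0'$ has vertices $v_1',\dots,v_{n-1}'$. Extend it to $\theta: G \to G'$ by sending $v_n \mapsto v_n'$. Then $\theta$ maps the edge $v_iv_n$ to $v_{\phi(i)}'v_n'$ and maps the blue edges of $G_0$ onto the blue edges of $G_0'$. For the color bijection $f$, take $b \mapsto b$ and $i \mapsto j$ whenever $\phi(v_i)=v_j'$. Each edge $v_iv_n$ is its own color class, and the blue class goes to the blue class, so $c'(\theta(e)) = f(c(e))$ for every edge $e$. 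Hence $(\theta,f)$ is an isomorphism of edge-colored graphs.

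For ``only if'', let $(\theta,f)$ be an isomorphism $G\cong G'$. The plan is to show that $\theta$ must send $v_n$ to $v_n'$ and the blue class to the blue class; then restricting $\theta$ gives $G_0\cong G_0'$.

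The key invariant is the multiset of color-class sizes, which any isomorphism preserves. In $G$ the classes are the $n-1$ singletons $\{v_iv_n\}$ and the blue class, which has $|E(G_0)|$ edges.
\begin{itemize}
\item If $|E(G_0)|\ge 2$, the blue class is the unique class with more than one edge. So $f(b)=b'$, and $\theta$ maps $E(G_0)$ onto $E(G_0')$.
\item Since neither $G_0$ nor $G_0'$ has isolated vertices, the vertex set of $G_0$ is $\{v_1,\dots,v_{n-1}\}$, and likewise for $G_0'$. Hence $\theta$ maps $\{v_1,\dots,v_{n-1}\}$ onto $\{v_1',\dots,v_{n-1}'\}$, and so $\theta(v_n)=v_n'$.
\item Restricting $\theta$ to $G_0$ therefore gives a graph isomorphism $G_0\cong G_0'$.
\end{itemize}
One could instead argue via the rainbow vertex as the unique vertex adjacent to every other vertex with distinct singleton colors, but uniqueness of the big class is cleaner.

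The main obstacle is the degenerate case $|E(G_0)|=1$. Then all color classes are singletons and $G$ is rainbow, so the class sizes no longer single out the blue edge. But $G_0$ has no isolated vertices, so $G_0$ is a single edge and $n-1=2$. Thus $n=3$ and $G$ is a rainbow triangle. The same holds for $G'$, because $G\cong G'$ forces $G'$ to have the same number of edges, $2n-3$. So $G_0$ and $G_0'$ are both $K_2$ and trivially isomorphic.

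The edge count also settles the case $|E(G_0)|\ge 2$ cleanly. Since $|E(G)| = |E(G_0)|+n-1$ and $|E(G')| = |E(G_0')|+n-1$, we get $|E(G_0)|=|E(G_0')|$. So the larger-class argument applies symmetrically to $G$ and $G'$.
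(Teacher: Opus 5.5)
Your proof is correct. It reaches the same pivot as the paper's proof: show $\theta(v_n)=v_n'$, then restrict $\theta$ to $G_0$. You get there through a different invariant, though.

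The paper simply asserts that $\theta$ must carry the rainbow vertex of $G$ to that of $G'$. This tacitly uses uniqueness of the rainbow vertex, which the paper does not verify. Uniqueness holds for $n\ge 4$, because any other vertex of valency $n-1$ would carry $n-2\ge 2$ blue edges. It fails for $n=3$, where $G$ is a rainbow triangle and every vertex is rainbow; the conclusion is then trivial, as you observe.

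You instead identify the blue class as the unique color class with more than one edge, using preservation of the multiset of class sizes and the edge count. You then use the no-isolated-vertices hypothesis to recognize $\{v_1,\dots,v_{n-1}\}$ as exactly the vertices covered by blue edges, which forces $\theta(v_n)=v_n'$. This makes every step explicit and isolates the degenerate case $n=3$ cleanly.

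What the paper's route buys is portability. In Proposition \ref{T-rainbow-vertex-complete}, extra edges are colored from $1,\dots,n-1$ and can enlarge those classes. Class sizes then no longer single out the blue class. A unique rainbow vertex, guaranteed there by minimum valency at least $2$, still determines the blue class as the set of edges whose colors do not appear at that vertex.
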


\begin{proof}
The reverse direction is obvious.  Assume $G \cong G'$ by an isomorphism $\theta$.  The rainbow vertices $v\in V$ and $v' \in V'$ must correspond under $\theta$; that is, $\theta(v) = v'$.  Then $G_0$ is isomorphic to $G_0'$ under $\theta$.
\end{proof}

\begin{cor}
A strict lower bound for the number of nonisomorphic minimal obstructions of order $n \geq 4$ is the number of isomorphism types of simple graphs of order $n-1$ without isolated vertices.
\end{cor}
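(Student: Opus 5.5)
The plan is to build an injection from isomorphism types of simple graphs of order $n-1$ without isolated vertices into isomorphism types of minimal obstructions of order $n$, and then to exhibit one more minimal obstruction of order $n$ outside its image; the extra obstruction is what makes the bound strict.

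\emph{The injection.} Given a simple graph $G_0$ on $n-1$ vertices with no isolated vertices, form the rainbow vertex graph $G$ of Construction \ref{rainbow-vertex} without extra edges. Theorem \ref{T-rainbow-vertex} says $G$ is a minimal obstruction, and it has order $n$. By Proposition \ref{rainbow-isom}, $G \cong G'$ exactly when $G_0 \cong G_0'$. Hence $G_0 \mapsto G$ induces a well-defined injective map on isomorphism types. So the number of nonisomorphic minimal obstructions of order $n$ is at least the number of such graph types.

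\emph{Strictness.} The natural candidate for the extra obstruction is the rainbow cycle $C_n$. Example \ref{cycle} shows it is a minimal obstruction of order $n$. Every graph in the image has a vertex of valency $n-1$, namely $v_n$. In $C_n$ every vertex has valency $2$, and $2 < n-1$ once $n \ge 4$. Since an isomorphism of edge-colored graphs is in particular a graph isomorphism, $C_n$ cannot be isomorphic to any graph in the image. This is exactly where the hypothesis $n\ge 4$ is used. For $n=3$ the rainbow triangle is itself the rainbow vertex graph built from $G_0=K_2$, and there are no other obstructions of order $3$ by Example \ref{cycle}. So strictness genuinely fails at $n=3$.

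There is one point to check, which I expect to be the only real obstacle. The proof of Proposition \ref{rainbow-isom} assumes that the rainbow vertex is carried to the rainbow vertex. When $G_0$ itself has a vertex of full valency, other vertices of $G$ may also have valency $n-1$. However, such a vertex has at least two incident blue edges, since it meets $G_0$ in at least $n-2 \ge 2$ edges when $n \ge 4$. It is therefore not rainbow, so the rainbow vertex is still preserved. In any case we may simply invoke Proposition \ref{rainbow-isom} as stated, and the injectivity needed above follows.
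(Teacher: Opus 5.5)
Your proof is correct and follows the paper's argument. Like the paper, you use the injection $G_0 \mapsto G$ obtained from Construction \ref{rainbow-vertex}, Theorem \ref{T-rainbow-vertex} and Proposition \ref{rainbow-isom}, and the rainbow $C_n$ as the witness to strictness. You also supply details the paper leaves implicit: the valency argument placing $C_n$ outside the image for $n \ge 4$, the check that the rainbow vertex is unique when $n \ge 4$, and the remark that strictness fails at $n = 3$.
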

\begin{proof}
Proposition \ref{rainbow-isom} implies that the number of isomorphism types of rainbow vertex graphs $G$ of order $n$ equals the number of isomorphism types of simple graphs $G_0$ of order $n-1$ without isolated vertices.
Each rainbow vertex graph of order $n$ is a minimal obstruction.
Examples that are not rainbow vertex graphs, such as a rainbow $n$-cycle, demonstrate that the bound is strict. 
\end{proof}

This corollary shows that the number of minimal obstructions of each order is quadratically exponential.
Indeed, there is the following asymptotic formula for the number of unlabeled simple graphs of order $n-1$:  $2^{\binom{n-1}{2}}/(n-1)!$ (\cite{HP} as cited in \cite[Sequence A000088]{OEIS}; see the first formula there for $a(n)$).  The number without isolated vertices is asymptotically the same.  Using Stirling's approximation, that is asymptotically
$$
\frac{2^{(n-1)(n-2)/2}}{((n-1)/e)^{n-1}\sqrt{2(n-1)\pi}} = \bigg(\frac{e \sqrt{2}^{\,n-2}}{n-1}\bigg)^{n-1} \frac{1}{\sqrt{2(n-1)\pi}}.
$$
This suggests there are so many nonisomorphic obstructions of rainbow-vertex type that searching for obstructions in a particular large edge-colored graph is not feasible.  Even those with maximum valency $d$ that might be absolute subgraphs of $d$-regular graphs are very numerous.  Indeed, minimal obstructions of order $n$  are very numerous, as one can infer from Proposition \ref{rainbow-isom}, and by Proposition \ref{T-rainbow-vertex-complete} those that are complete graphs are about as numerous.  

Despite this, detecting rainbow vertex subgraphs of order $d+1$ (of the kind without extra edges) in a regular edge-colored graph $G$ of order $n$ and valency $d$ is not difficult, since all rainbow vertices $v$ in $G$ can be detected in time $O(d^2n)$ by testing each vertex $v$ of $G$ for having all $d$ colors (time $O(d)$) and, if it does have them, testing the induced subgraph on $N(v)$ for having only one color and no isolated vertices (time $O(d^2)$).

We add a further detail about isomorphism when there are extra edges.  It requires $G_0$ to have minimum valency $>1$.  We write $G_{\text{rv}}$ to emphasize that now we allow extra edges in Construction \ref{rainbow-vertex}.

\begin{prop}\label{T-rainbow-vertex-complete}
In Construction \ref{rainbow-vertex}, if $G_0$ and $G_0'$ have minimum valency at least 2, and if $G_{\mathrm{rv}}$ and $G'_{\mathrm{rv}}$ are isomorphic, then $G_0$ and $G_0'$ are isomorphic graphs.
\end{prop}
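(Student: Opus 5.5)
The plan is to show that an isomorphism $(\theta,f)$ of edge-colored graphs $G_{\mathrm{rv}} \cong G'_{\mathrm{rv}}$ has to carry the rainbow vertex to the rainbow vertex and the color $b$ to the color $b'$. Once that is established, $\theta$ restricted to the remaining vertices is a graph isomorphism $G_0 \to G_0'$. This is the same idea as in Proposition \ref{rainbow-isom}. The difference is that extra edges may now also meet the non-rainbow vertices, so the rainbow vertex has to be characterized intrinsically; that is what the hypothesis on minimum valency is for.

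\emph{Step 1: recognizing the rainbow vertex.} Since $G_{\mathrm{rv}}$ and $G'_{\mathrm{rv}}$ are isomorphic, they have the same order $n$. In $G_{\mathrm{rv}}$ the vertex $v_n$ has $n-1$ incident edges, and they carry the $n-1$ distinct colors $1,\dots,n-1$. Extra edges only join pairs of vertices that are non-adjacent in $G_0$, so they never meet $v_n$. Now take any other vertex $v_i$ with $i<n$. By hypothesis it has valency at least $2$ in $G_0$, so at least two of its incident edges have color $b$. Hence $v_n$ is the unique vertex of $G_{\mathrm{rv}}$ all of whose incident edges have distinct colors, and likewise $v_n'$ is the unique such vertex of $G'_{\mathrm{rv}}$. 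An isomorphism of edge-colored graphs preserves the partition into color classes, so it preserves the property that no two incident edges share a color. Therefore $\theta(v_n)=v_n'$. I expect this step to be the main obstacle, and it is exactly where the minimum-valency assumption is needed. If $G_0$ had a vertex of valency $1$, that vertex could be made rainbow by adding extra edges, and the argument would break down.

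\emph{Step 2: recognizing the color $b$.} The colors appearing at $v_n$ are exactly $1,\dots,n-1$. The one color class of $G_{\mathrm{rv}}$ with no edge at $v_n$ is therefore the class of $b$, which is precisely $E(G_0)$, because extra edges use only the colors $1,\dots,n-1$. The analogous statement holds in $G'_{\mathrm{rv}}$. Since $\theta(v_n)=v_n'$ and $\theta$ maps color classes onto color classes, $f(b)=b'$. Hence $\theta$ maps $E(G_0)$ bijectively onto $E(G_0')$.

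\emph{Step 3: conclusion.} The map $\theta$ restricts to a bijection from $\{v_1,\dots,v_{n-1}\}$ onto $V(G'_{\mathrm{rv}})\setminus\{v_n'\}$, and this bijection sends edges of $G_0$ to edges of $G_0'$. It also sends non-edges of $G_0$ to non-edges of $G_0'$, because $\theta^{-1}$ has the same property by the symmetric argument. So $\theta$ restricts to a graph isomorphism $G_0 \cong G_0'$. Because $G_0$ and $G_0'$ have no isolated vertices, their vertex sets are exactly $V\setminus\{v_n\}$ and $V'\setminus\{v_n'\}$, so nothing is lost in this restriction.
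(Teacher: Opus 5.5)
Your proof is correct and follows the paper's argument. The minimum-valency hypothesis makes $v_n$ the unique vertex with no repeated color among its incident edges, so $\theta(v_n)=v_n'$. The class $b$ is then intrinsic as the only color class with no edge at $v_n$, and the argument of Proposition~\ref{rainbow-isom} finishes the proof; you have written out in full the steps the paper compresses into one line.
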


\begin{proof}
The valency constraint ensures there is only one rainbow vertex in any $G_{\text{rv}}$.  It follows that the color class $b$ is intrinsic to $G_{\text{rv}}$, and the proof of Proposition \ref{rainbow-isom} goes through.
\end{proof}

\begin{figure}[htbp]

\[\begin{tikzpicture}
\filldraw[fill=cyan!7, draw=cyan, thick]
    (0,0) ellipse (5 cm and 2.5cm);
\node[black] at (0.5,1.9) {$G_0 \cong G_0'$};

\node[circle,fill=red,inner sep=1.8pt] (v1) at (-1.1, 1.4) {};
\node[circle,fill=red,inner sep=1.8pt] (v2) at ( 1.3, 1.25) {};
\node[circle,fill=red,inner sep=1.8pt] (v3) at (-2.6,-1.3) {};
\node[circle,fill=red,inner sep=1.8pt] (v4) at ( 0.8,-1.4) {};
\node[circle,fill=red,inner sep=1.8pt] (v5) at ( 0.2, 0.2) {};
\node[above left,text=red] at (v1) {$a$};
\node[below left,text=red] at (v3) {$b$};

\node[circle,fill=red,inner sep=1.8pt] (u) at (6.5,0) {};
\node[text=red,right] at (u) {$u$};

\tikzset{
    edge/.style={line width= 2 pt}
}
\draw[red, thick]
    (v1) --
    node[pos=0.3,left]  { \color{black}$ 1$ in $G_{rv}'$}
    node[pos=0.7,right] { \color{black}$2$ in $G_{rv} $}
    (v3);
\draw[red] (u) -- node[pos=.5,above right,text= black] {$1$} (v1);
\draw[red] (u) -- node[pos=.7,below left,text= black]       {$2$} (v2);
\draw[red] (u) -- node[pos=.6,above,text= black]        {$4$} (v3);
\draw[red] (u) -- node[pos=.5,below,text=black]       {$5$} (v4);
\draw[red] (u) -- node[pos=.6,below left,text= black]  {$3$} (v5);

\end{tikzpicture}\]

\caption{An illustration of the failure of the converse of Proposition \ref{T-rainbow-vertex-complete}.  The edge $ab$ has different colors, 2 or 1 depending on whether it is in $G_{rv}$ or $G_{rv}^{'}$.}
\label{rainbow-noniso}
\end{figure}
The converse is not true.  The extra edges in $G_{\mathrm{rv}}$ and $G'_{\mathrm{rv}}$ may be different, and even if they are the same edges, they may not have colors that correspond under the colored-graph isomorphism of $G$ and $G'$ as in Proposition \ref{rainbow-isom}. 
See Figure \ref{rainbow-noniso} for an example.
\begin{figure}[h]
\begin{center}
\begin{tikzpicture}[
    scale=1.0,
    every node/.style={
        circle,
        fill=black,
        draw,
        inner sep= 0pt,
        label distance=2mm},
    every path/.style={line width=1.2pt}
]

\foreach \i/\a in {
1/90,
2/38.57,
3/-12.86,
4/-64.29,
5/-115.71,
6/-167.14,
7/141.43}
{
    \node[label=\a:\i] (v\i) at (\a:2) {};
}


\draw[Grey] (v1)--(v2);
\draw[Grey] (v3)--(v4);
\draw[Grey] (v5)--(v6);


\draw[orange] (v2)--(v3);
\draw[orange] (v4)--(v5);
\draw[orange] (v6)--(v7);


\draw[yellow!90!orange] (v3)--(v5);
\draw[yellow!90!orange] (v2)--(v4);
\draw[yellow!90!orange] (v1)--(v7);


\draw[pink] (v1)--(v4);
\draw[pink] (v2)--(v5);
\draw[pink] (v3)--(v6);


\draw[cyan] (v1)--(v5);
\draw[cyan] (v2)--(v6);
\draw[cyan] (v3)--(v7);


\draw[violet] (v1)--(v6);
\draw[violet] (v2)--(v7);
\draw[violet] (v4)--(v7);


\draw[magenta] (v1)--(v3);
\draw[magenta] (v4)--(v6);
\draw[magenta] (v5)--(v7);

\end{tikzpicture}
\end{center}
\caption{An illustration of Pure Equinumerosity for $K_7$ .}
\label{pureequi}
\end{figure}

\subsection{Equinumerosity}

There is an interesting type of minimal obstruction based on the concept of equinumerosity, in which all color classes have about the same number of edges.
Our analysis is guided by the partition and cycle matroids and Theorem \ref{r1r2min}.

\begin{theorem}[Pure Equinumerosity]\label{equinum-odd}
For odd $n$, if $K_n$ is colored in $n$ colors so that every color class has the same number of edges, then this graph is a minimal obstruction.
\end{theorem}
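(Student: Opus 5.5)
The plan is to verify the two conditions separately: that the colored $K_n$ is an obstruction, and that no proper absolute subgraph is one, which by Theorem \ref{r1r2min} is exactly minimality. First I will record the basic counts. $K_n$ has $\binom{n}{2} = n(n-1)/2$ edges, so with $n$ equal color classes each class has exactly $(n-1)/2$ edges; this is an integer because $n$ is odd. The whole graph has $r_1 = n$ and $r_2 = n-1$, since $K_n$ is connected. Hence $r_1 > r_2$ and the graph is an obstruction.

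For minimality, let $B$ be the union of any $k$ color classes with $0 \le k \le n-1$. The case $k=0$ is trivial, so assume $k \ge 1$. Then $r_1(B) = k$, and because the classes are disjoint, $|B| = k(n-1)/2$ exactly. We must show $r_2(B) = n - c(V,B) \ge k$, that is, $c(V,B) \le n-k$. I will argue by contradiction: suppose $(V,B)$ has $c \ge n-k+1$ components.

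The key step is the standard extremal bound on the number of edges of a simple graph on $n$ vertices with $c$ components. Such a graph has at most $\binom{n-c+1}{2}$ edges, attained by one clique on $n-c+1$ vertices together with $c-1$ isolated vertices. This follows from convexity: $\sum \binom{n_i}{2}$ subject to $\sum n_i = n$ and $n_i \ge 1$ is maximized by concentrating the vertices in one part. Since $c \ge n-k+1$, we get $n-c+1 \le k$, so $|B| \le \binom{k}{2} = k(k-1)/2$. But $|B| = k(n-1)/2$, and $k(n-1)/2 > k(k-1)/2$ because $k \le n-1 < n$ and $k \ge 1$. This contradiction shows $r_2(B) \ge k = r_1(B)$ for every proper absolute subgraph, so the graph is a minimal obstruction by Theorem \ref{r1r2min}.

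The only step needing care is the extremal edge bound for graphs with a given number of components. I will justify it briefly by the convexity argument above, since moving a vertex from a smaller component to a larger one does not decrease $\sum \binom{n_i}{2}$. Everything else is direct counting, and oddness of $n$ is used only to make the equal class size possible.
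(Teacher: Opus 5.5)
Your proposal is correct and follows essentially the same argument as the paper: an absolute subgraph with $k$ colors has exactly $k(n-1)/2$ edges. A graph with at least $n-k+1$ components can have at most $\binom{k}{2}$ edges, which forces $k \ge n$. You also supply the convexity justification for this extremal edge bound and treat the case $k=0$ separately, both of which the paper leaves implicit.
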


We call such a graph a \emph{pure equinumerous obstruction}.

\begin{proof}
Because the graph has $n$ colors, it is an obstruction.  We show that no proper absolute subgraph is an obstruction.  

The number of edges in each color class is $\frac12(n-1)$.  An absolute subgraph $H$ with $r$ colors has $|E(H)| = \frac12(n-1)r$ edges.  It is an obstruction if and only if $r = r_1(H) > r_2(H) = n - c(H)$; that is, $c(H) > n-r$.  A graph $H$ of order $n$ with $c(H) > n-r$ has the most edges when it has the fewest components (which in this case is $n-r+1$ components) and one component is a complete graph of order $r$ while all other components are isolated vertices, which gives it at most $\frac12r(r-1)$ edges. 
Thus, $\frac12(n-1)r = |E(H)| \leq \frac12r(r-1)$.  
It follows that $n-1 \leq r-1$.  In other words, if $H$ is an obstruction, then $r=n$, i.e., $H = K_n$, which proves minimality of $K_n$. 
\end{proof}

We want similar results for even $n$, but since each color class should have the fractional number $\frac12(n-1)$ of edges, we have to relax the requirement of equinumerosity.  We call a coloring \emph{nearly equinumerous} if the numbers of edges in different color classes differ by at most 1.
\begin{figure}
    \centering
    \[
\begin{tikzpicture}[
    scale=1.1,
    every node/.style={
        circle,
        fill=black,
        draw,
        inner sep=0pt,
        minimum size=2.5pt
    },
    every path/.style={line width=1.5pt}
]

\node[label=above:$v_1$] (v1) at (90:1.5) {};
\node[label=right:$v_2$] (v2) at (0:1.5) {};
\node[label=below:$v_3$] (v3) at (-90:1.5) {};
\node[label=left:$v_4$] (v4) at (180:1.5) {};

\draw[red] (v4)--(v2);
\draw[red] (v3)--(v4);

\draw[cyan] (v1)--(v3);
\draw[cyan] (v1)--(v2);

\draw[green] (v1)--(v4);

\draw[yellow!85!orange] (v2)--(v3);

\end{tikzpicture}
\]
    \caption{An illustration of Near Equinumerosity for $K_4$.}
    \label{NearEqui}
\end{figure}

\begin{theorem}[Near Equinumerosity]\label{equinum-even}
For even $n \geq 4$, if $K_n$ is colored in $n$ colors so that all color classes have numbers of edges that differ by at most $1$, then this graph is a minimal obstruction.
\end{theorem}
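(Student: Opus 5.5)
The plan is to follow the proof of Theorem \ref{equinum-odd}, comparing the number of edges an absolute subgraph must have with the number of edges a graph of order $n$ can have when it has too many components. First, $K_n$ with $n$ colors has $r_1 = n > n-1 = r_2$, so it is an obstruction. By Theorem \ref{r1r2min} it remains to show that every proper absolute subgraph $H$ satisfies $r_1(H) \le r_2(H)$. I view $H$ as a spanning subgraph on all $n$ vertices, which does not change $r_2$.

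The first step is to pin down the class sizes. The total number of edges is $\frac12 n(n-1)$, the average class size is $\frac12(n-1)$, and the sizes differ by at most $1$. Hence every class has $\frac n2$ or $\frac n2 - 1$ edges. If $a$ classes have size $\frac n2$, then
\[
a\cdot\tfrac n2 + (n-a)\left(\tfrac n2 - 1\right) = \tfrac12 n(n-1),
\]
which gives $a = \frac n2$. So exactly half of the classes have each size.

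Now let $H$ be absolute with $r$ colors, where $1 \le r \le n-1$ (the case $r = 0$ is trivial). As in the odd case, $H$ is an obstruction exactly when $c(H) \ge n-r+1$. A graph of order $n$ with at least $n-r+1$ components has at most $\frac12 r(r-1)$ edges, the extreme case being $K_r$ together with isolated vertices. On the other hand, $|E(H)| \ge r\left(\frac n2 - 1\right)$. These are incompatible when $r\left(\frac n2 - 1\right) > \frac12 r(r-1)$, that is, when $r < n-1$. This settles every $r \le n-2$.

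The borderline case $r = n-1$ is the one place the crude bound fails, and I expect it to be the main obstacle. There I will use the complement count instead: $H$ omits exactly one color class, so $|E(H)| \ge \frac12 n(n-1) - \frac n2 = \frac12 n(n-2)$. An obstruction with $r = n-1$ needs at least $2$ components, hence at most $\frac12 (n-1)(n-2)$ edges. Since $\frac12 n(n-2) > \frac12 (n-1)(n-2)$ for $n \ge 3$, this is a contradiction. Therefore no proper absolute subgraph is an obstruction, and $K_n$ is a minimal obstruction. The hypothesis $n \ge 4$ is needed only so that the class sizes $\frac n2 - 1$ are at least $1$, so that every color actually occurs.
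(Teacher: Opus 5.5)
Your proposal is correct and follows essentially the same route as the paper: both rule out $r\le n-2$ by comparing $|E(H)|\ge r\bigl(\frac n2-1\bigr)$ with the $\frac12 r(r-1)$ edges of $K_r$ plus isolated vertices, and for $r=n-1$ both start from $|E(H)|\ge \frac12 n(n-1)-\frac n2$. The only difference is that you compare this bound directly with $\frac12(n-1)(n-2)$. The paper instead goes through an equality-case argument: all $n-1$ remaining classes would have to be small, which forces $n-1\le \frac n2$. Your version is slightly more direct.
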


We call such a graph a \emph{nearly equinumerous obstruction}.

\begin{proof}
Near equinumerosity implies that $n/2$ color classes have $\frac12(n-2)$ edges each and $n/2$ have $\frac12 n$ edges each.  As in the previous proof we suppose an absolute subgraph $H$ with $r$ colors is an obstruction so $c(H) > n-r$.  Then $|E(H)| \leq \frac12r(r-1)$ .  The fewest edges possible in $H$ is $\frac12(n-2)r$, so to have an obstruction we need $\frac12(n-2)r \leq \frac12r(r-1)$, which implies that $n-1 \leq r$.  

Suppose $r=n-1$.  Then $H$ has at least $\frac12n(n-1) - \frac12 n$ edges, while $c(H) \geq 2$.  The most edges possible in $H$ with 2 components is $\frac12(n-1)(n-2)$.  If $H$ is an obstruction, $\frac12(n-2)(n-1) \leq |E(H)| \leq \frac12(n-1)(n-2)$, which is possible only if $H = K_{n-1} \cup K_1$ and every one of the $n-1$ color classes in $H$ has the smaller number of edges, $\frac12(n-2)$.  This is possible only if $n-1 \leq \frac12n$, that is, $n \leq 2$, which contradicts our assumption.  Therefore, the only obstruction that is an absolute subgraph of this colored $K_n$ is the entire graph, and it is minimal.
\end{proof}

\subsection{Bicolored Vertex}

Suzuki proved the following second theorem:

\begin{theorem}[{\cite[Theorem 8]{Suz}}]\label{suz}  An edge-colored complete graph $K_n$ has a rainbow spanning tree if every color class has at most $n/2$ edges.
\end{theorem}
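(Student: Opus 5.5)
The plan is to derive the statement from Theorem \ref{main} with $u = n-1$, since a rainbow spanning tree of $K_n$ is exactly a rainbow forest with $n-1$ edges. First I will check that $u \le t$. Each color class has at most $n/2$ edges and $K_n$ has $\binom n2$ edges, so
$$t \ge \binom n2 \Big/ \frac n2 = n-1.$$
So it suffices to verify the Rainbow Forest Inequality for $u = n-1$. By Lemma \ref{absA}, it is enough to check it for absolute sets $A$.

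Fix an absolute $A$ and put $B = E \setminus A$. Let $c = c(V,B)$ be the number of components of the spanning subgraph $(V,B)$, so that $r_2(B) = n - c$. The inequality to prove becomes $r_1(A) \ge c-1$.

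\emph{Lower bound on $|A|$.} If $c = 1$ there is nothing to show. Otherwise let the components of $(V,B)$ have orders $n_1,\dots,n_c$. Since the host graph is complete, every edge joining two different components lies outside $B$, hence in $A$. Therefore
$$|A| \ge \binom n2 - \sum_i \binom{n_i}2 .$$
For fixed $c$, the sum $\sum_i \binom{n_i}{2}$ subject to $\sum_i n_i = n$ and $n_i \ge 1$ is maximized by convexity when one part has order $n-c+1$ and the rest are singletons. This gives
$$|A| \ge \binom n2 - \binom{n-c+1}2 = \frac{(c-1)(2n-c)}2 .$$

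\emph{From $|A|$ to $r_1(A)$.} Since every color class has at most $n/2$ edges, $A$ needs at least $|A|/(n/2)$ colors, so $r_1(A) \ge \lceil 2|A|/n \rceil$. Hence $r_1(A) \ge c-1$ follows as soon as $2|A|/n > c-2$, that is, $|A| > (c-2)n/2$. Using the bound above, it suffices that
$$(c-1)(2n-c) - (c-2)n = c(n-c+1) > 0 ,$$
which holds because $1 \le c \le n$.

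This establishes the Rainbow Forest Inequality for $u = n-1$ for all absolute $A$, hence for all $A$ by Lemma \ref{absA}. Theorem \ref{main} then gives a rainbow forest with $n-1$ edges, which is a rainbow spanning tree. The only step needing care is the extremal estimate for the minimum number of edges crossing a partition of $K_n$ into $c$ parts. The one-big-part configuration is the standard answer, and I would justify it by the convexity (shifting) argument above. The remaining arithmetic is routine.
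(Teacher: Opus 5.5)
Your proof is correct. Note that the paper gives no proof of this statement: it quotes it from Suzuki \cite{Suz} and uses it as a black box in Theorem \ref{valvert-odd}. Your route is to derive it directly from the paper's Theorem \ref{main} with $u=n-1$:
\begin{itemize}
\item the count $t\ge\binom n2/(n/2)=n-1$ justifies $u\le t$;
\item every edge between distinct components of $(V,E\setminus A)$ must lie in $A$;
\item the partition with one big part and $c-1$ singletons minimizes the number of such crossing edges;
\item the class-size bound turns this into $r_1(A)\ge 2|A|/n$.
\end{itemize}
This is the natural edge-counting deduction from Suzuki's own necessary-and-sufficient criterion (his Theorem 7), of which Theorem \ref{main} is the matroid form. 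What it buys is that the paper's use of Theorem \ref{suz} becomes self-contained, resting only on Theorem \ref{main}.

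Two small simplifications are available. First, you never use that $A$ is absolute, so the appeal to Lemma \ref{absA} can be dropped. Second, the rounding step is unnecessary, since $2|A|/n\ge (c-1)(2n-c)/n\ge c-1$ already holds whenever $c\le n$.
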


Suzuki's theorem implies there are at least $n-1$ colors.  This theorem was improved by Akbari and Alipour.  In a coloring by $k$ colors, let $a_i$ denote the number of edges with color $i$.

\begin{theorem}[{\cite[Theorem 1]{AA}}]\label{aa}
Let $K_n$ be edge-colored in $k \geq n-1$ colors so that $1 \leq a_1 \leq \cdots \leq a_k \leq \frac12(n+3)$.  Then there is a rainbow spanning tree.
\end{theorem}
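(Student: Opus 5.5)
Since this is Akbari and Alipour's theorem, I would reduce it to the Fundamental Theorem (Theorem \ref{compare}). By Theorem \ref{nominobs} it suffices to show that no absolute subgraph $H$ of the colored $K_n$ is an obstruction; note that a total rainbow forest in connected $K_n$ with $t \ge n-1$ colors must have $t=n-1$ (Proposition \ref{upperbd}), and then it is a rainbow spanning tree. (If $k>n-1$, a rainbow spanning tree is still what is wanted. Then I would use the ``for $u$'' version, with $u=n-1$, and show $k-(n-1)\le r_2(B)-r_1(B)$ for every absolute $B$. Equivalently, the Rainbow Forest Inequality for $u=n-1$ holds, i.e.\ $r_1(A)+r_2(E\setminus A)\ge n-1$ for absolute $A$.) So the plan is this: fix an absolute set $A$, made of $s$ color classes, with complement $B$ on $r=k-s$ colors. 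Assume $B$ spans a graph with $c$ components, so $r_2(B)=n-c$, and we must show $s+n-c\ge n-1$, i.e.\ $c\le s+1$.

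The key counting step follows the pure equinumerosity proof. If $B$ has $c$ components on $n$ vertices, then the edges not in $B$ include all edges between different components. Let the component sizes be $n_1,\dots,n_c$. The number of such crossing edges is $\binom n2-\sum\binom{n_i}2$, and this is minimized when one component has size $n-c+1$ and the rest are singletons. That minimum is $(c-1)(n-c+1)+\binom{c-1}2$. All these edges lie in $A$, which consists of $s$ classes, each of size at most $\frac12(n+3)$. Hence
$$(c-1)(n-c+1)+\tbinom{c-1}{2}\le s\cdot\tfrac12(n+3).$$
Supposing $c\ge s+2$, i.e.\ $s\le c-2$, I would derive a contradiction. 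With $m=c-1\ge1$ and $s\le m-1$ the inequality becomes $m(n-m)+\binom m2\le (m-1)(n+3)/2$. I expect this to fail except for small $m$ or $m$ near $n$.

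The main obstacle is the boundary cases where this crude count does not suffice: $m$ small (e.g.\ $m=1$ forces $s=0$, which is trivial since then $B=E$ is connected), and $m$ close to $n-1$. In the latter case $B$ is nearly edgeless, so $r_1(B)$ is small. There one must also use that $B$ contains $r=k-s$ classes each with at least one edge, so $|B|\ge r$, while $|B|\le\binom{n-m}{2}$. Combined with the total count $\sum a_i=\binom n2$ and $k\ge n-1$, this should close the gap. I would also use the ordering $a_1\le\dots\le a_k$: the $s$ classes in $A$ total at most $a_{k-s+1}+\dots+a_k$. Handling these extremes cleanly, and possibly small $n$ separately, is where I expect the real work, and where the exact constant $\frac12(n+3)$ becomes sharp.
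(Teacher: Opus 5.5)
The paper gives no proof of this statement. It is quoted from Akbari and Alipour \cite{AA} and used as a black box in Theorem~\ref{valvert-even}. Your route is a genuinely different, self-contained argument, and it works with less effort than you expect. It uses Theorem~\ref{main} with $u=n-1$, reduces to absolute $A$ by Lemma~\ref{absA}, and counts the edges between components of $B=E\setminus A$. The $u=n-1$ version is the right formulation for all $k\ge n-1$; your first sentence via Theorem~\ref{nominobs} only covers $k=n-1$.

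With $m=c-1$ and $s\le m-1$, the arithmetic you left undone is
\[
2\Big[mn-\tbinom{m+1}{2}\Big]-(m-1)(n+3)=m(n-m-4)+n+3 .
\]
This is at least $n+3$ for $1\le m\le n-4$ and equals $6$ for $m=n-3$. So the crude count already gives the contradiction for every $m\le n-3$, small $m$ included. The only remaining cases are $m\in\{n-2,n-1\}$. There $r\le|B|\le\binom{n-m}{2}=n-1-m$, hence $s=k-r\ge m$, contradicting $s\le m-1$. You need neither the ordering of the $a_i$, nor $\sum a_i=\binom n2$, nor a separate small-$n$ analysis. You should replace ``I expect'' and ``should close the gap'' by this computation.

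Also, the constant is governed by the main count at $m=n-3$, not by the extremes, as you guessed. For $n\ge5$, take a rainbow triangle of three singleton classes, with the remaining $\binom n2-3$ edges in $n-4$ classes. This is an absolute obstruction with $k=n-1$. It forces a class of size at least $\frac{(n-3)(n+2)}{2(n-4)}=\frac{n+3}{2}+\frac{3}{n-4}$, so $\frac12(n+3)$ is essentially best possible.

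Compared with the paper's bare citation, your argument derives the result in a few lines from the paper's own Fundamental Theorem, in the same spirit as the counting in Theorem~\ref{equinum-odd}. It also yields Suzuki's Theorem~\ref{suz}, since $a_i\le n/2$ forces $k\ge n-1$. So Theorems~\ref{valvert-odd} and \ref{valvert-even} would rest only on the matroid criterion. The citation costs nothing and credits the original source.
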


We use these results in Theorems \ref{valvert-odd} and \ref{valvert-even} to establish a new family of obstructions based on a partial kind of equinumerosity.  A \emph{bicolored vertex} is a vertex whose incident edges are colored in two colors that do not appear on any other edges.

\begin{figure}
    \centering
\[
\begin{tikzpicture}[
    scale=1.0,
    every path/.style={line width=1.4pt},
    every node/.style={
        circle,
        fill=black,
        draw,
        inner sep=0pt,
        minimum size=2.5pt
    }
]

\foreach \i/\a in {
1/90,
2/18,
3/-54,
4/-126,
5/162}
{
    \node[label=\a:$v_{\i}$] (v\i) at (\a:1.8) {};
}

\draw[DeepPink] (v1)--(v2);
\draw[violet] (v3)--(v4);

\draw[green] (v2)--(v3);
\draw[green] (v4)--(v5);

\draw[orange] (v3)--(v5);
\draw[cyan] (v1)--(v4);

\draw[cyan] (v1)--(v5);
\draw[orange] (v2)--(v4);

\draw[cyan] (v1)--(v3);
\draw[violet] (v2)--(v5);

\end{tikzpicture}
\]
    \caption{An illustration of Theorem \ref{valvert-odd} for $n=5$ with $v_1$ the bicolored vertex.}
    \label{bivertexodd}
\end{figure}
 
\begin{theorem}[Bicolored Vertex: Odd Case]\label{valvert-odd}
For odd $n\geq3$, let $K_n$ have $n$ color classes such that one vertex $v$ is incident with all the edges of two classes but no other edges, and the remaining edges are colored in $n-2$ other colors with exactly $\frac12(n-1)$ edges in each class.  Then this graph is a minimal obstruction.
\end{theorem}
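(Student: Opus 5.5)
We check the two defining conditions separately. Let the two colors at $v$ be $\alpha$ and $\beta$. Let $K'$ be the complete graph $K_n - v$ of order $n-1$; its edges carry the other $n-2$ colors, each class having exactly $\frac12(n-1)$ edges. (As a sanity check, $(n-2)\cdot\frac12(n-1)=\binom{n-1}{2}$.)

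\emph{Obstruction.} The graph has $r_1 = n$ colors, and $r_2 = n-1$ since $K_n$ is connected. So $r_1>r_2$.

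\emph{Minimality.} By Theorem \ref{r1r2min}, it suffices to show $r_1(H)\le r_2(H)$ for every proper absolute subgraph $H$. Write $H'$ for the part of $H$ inside $K'$. Suppose $H'$ uses $s$ of the $n-2$ colors, so $|E(H')| = \frac12(n-1)s$. The key step is a counting lemma modeled on the proof of Theorem \ref{equinum-odd}. If $H'$, viewed on the $n-1$ vertices of $K'$, has $c'$ components, then $r_2(H') = n-1-c'$. An $(n-1)$-vertex graph with $c'$ components has at most $\binom{n-c'}{2}$ edges, attained by one complete component plus isolated vertices. I will use this bound in two strengths.

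\begin{itemize}
\item[(a)] $r_2(H')\ge s$. If instead $n-1-c' < s$, then $n-c'\le s$, so
$\frac12(n-1)s \le \frac12 (n-c')(n-c'-1) \le \frac12 s(s-1)$. This forces $s \ge n$, contradicting $s\le n-2$. The case $s=0$ is trivial.
\item[(b)] If $1\le s\le n-3$, then $r_2(H')\ge s+1$. If instead $r_2(H') = s$, then $n-c' = s+1$, so $\frac12(n-1)s \le \frac12(s+1)s$. This gives $s\ge n-2$, a contradiction.
\end{itemize}

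Now split into cases according to which of $\alpha,\beta$ lie in $H$.

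\begin{itemize}
\item \emph{Neither $\alpha$ nor $\beta$.} Then $r_1(H)=s$, and $r_2(H)=r_2(H')\ge s$ by (a).
\item \emph{Exactly one of them, say $\alpha$.} Then $r_1(H) = s+1$. The $\alpha$-edges join the previously isolated vertex $v$ to $H'$, so $r_2(H)\ge r_2(H')+1 \ge s+1$ by (a).
\item \emph{Both $\alpha$ and $\beta$.} Then $H$ contains the full spanning star at $v$, so $H$ is connected and $r_2(H)=n-1$. Because $H$ is proper, $s\le n-3$, so $r_1(H)=s+2\le n-1=r_2(H)$.
\end{itemize}

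So the both-colors case is immediate, and part (b) is not actually needed. I kept it only as a fallback in case the spanning-star observation needed supplementing.

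The main point needing care is the extremal fact used in the counting lemma. Namely, among graphs on $m$ vertices with at least $c$ components, the edge count is maximized by $K_{m-c+1}$ plus isolated vertices. I would justify this as the paper does in Theorem \ref{equinum-odd}, by noting that merging components into one complete component only increases the edge count. With this in hand, (a) applies uniformly in the first two cases, which completes minimality.
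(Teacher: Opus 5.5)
Your proof is correct. It uses the same case split as the paper, according to which of the two classes at $v$ survive in the absolute subgraph. The case where both survive is handled identically, via the spanning star at $v$. The two routes differ in the other two cases.

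\textbf{Neither class at $v$.} For absolute subgraphs of $K_n\setminus v$, the paper applies Suzuki's theorem: every class has at most $\frac12(n-1)$ edges in $K_{n-1}$, so there is a rainbow spanning tree, which is total since there are $n-2$ colors. The Fundamental Theorem then shows that no absolute subgraph is an obstruction. You verify $r_1\le r_2$ directly, using the extremal edge count from the proof of Theorem~\ref{equinum-odd}.

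\textbf{Exactly one class at $v$.} The paper argues through the bond lemma: an obstruction there would, after deleting the class at $v$, leave an obstruction inside $K_n\setminus v$. You observe directly that an edge at the otherwise isolated vertex $v$ raises $r_2$ by one. This is the same idea without the detour.

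\textbf{What each route buys.} The paper's route is shorter and ties the construction to known rainbow-spanning-tree results. Yours is self-contained and uses only a \emph{lower} bound on class sizes, and that pays off. With class sizes at least $\frac12(n-2)$, the same count gives $\frac12(n-2)s\le\frac12 s(s-1)$, i.e.\ $s\ge n-1>n-2$. So your argument also proves the even case, Theorem~\ref{valvert-even}, without appealing to Akbari--Alipour.

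\textbf{One wording point.} The bound $\binom{n-c'}{2}$ is best justified by $\binom{a}{2}+\binom{b}{2}\le\binom{a+b-1}{2}$ for $a,b\ge 1$. In words: move vertices into a single component while leaving each other component as a single vertex, so the number of components is unchanged. Literally merging components, as you phrase it, would reduce the component count.
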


\begin{proof}
We consider ways to construct absolute proper subgraphs.  Let the color classes incident with $v,$ the bicolored vertex, be $C_n$ and $C_{n-1}$ (so $|C_n|+|C_{n-1}|=n-1$. 
First, delete $C_n \cup C_{n-1}$, leaving $K_n \setminus v = K_{n-1}$ colored so every color class has size $\frac12(n-1)$.  By Suzuki's theorem, this graph has a rainbow spanning tree, which is total because in $K_n \setminus v$ we are using $n-2$ colors; thus, $K_n \setminus v$ contains no obstruction, by Theorem \ref{compare}.  
Second, delete $C_n$ but not $C_{n-1}$ (or the reverse).  By Proposition \ref{no-bond}, this graph cannot be an obstruction, since $C_{n-1}$ is a color class that is a bond, and deleting it gives $K_n \setminus v$ which by Suzuki's theorem, as before, cannot be or contain an obstruction.  
Third, deleting any color classes except $C_{n-1}$ and $C_n$ leaves a graph which still has the spanning tree $C_n \cup C_{n-1}$, which has $r_2 = n-1 \geq r_1$ so it is not an obstruction.
\end{proof}

There is not a generalization that allows arbitrary coloring of $K_n \setminus v$.  For an example, see Figure \ref{valvert-odd-bad}.

\begin{figure}[htb]
\begin{center}
  \begin{tikzpicture}[  every path/.style={very thick}, 
        every node/.style={circle,fill=black,draw,inner sep = 0.5pt, minimum size= 2pt}
        ]
    \foreach \i in {1,...,7}{
        \path (\i*360/7+18:2) coordinate (n\i);
        \ifnum\i>1\relax
            foreach \j in {\i,...,1}{ (n\i) edge (n\j) } 
        \fi;
    }
    \foreach \i in {1,...,7}{
    \node[
        draw=none,
        fill=none,
        font=\small
    ] at (\i*360/7+18:2.35) {$v_{\i}$};
}
     \draw[cyan] 
    (n1) -- (n2);
    \draw[cyan] 
    (n1) -- (n3);
    \draw[cyan] 
    (n1) -- (n4);
    \draw[cyan] 
    (n1) -- (n5);
     \draw[cyan] 
    (n1) -- (n6);
     \draw[black] 
    (n1) -- (n7);
    \draw[green] 
    (n2) -- (n7);
    \draw[yellow!80!orange] 
    (n2) -- (n4);
     \draw[red] 
    (n4) -- (n7);
    \draw[pink] 
    (n6) -- (n7)(n2) -- (n6)(n4) -- (n6)(n5) -- (n6)(n5) -- (n7)(n4) -- (n5)(n2) -- (n5);
    \draw[violet]
    (n3) -- (n6) (n2) -- (n3)(n3) -- (n7)(n3) -- (n5)(n3) -- (n4);
      ;
    \foreach \i in {1,...,7}{\fill (n\i) circle(2pt);}
  \end{tikzpicture}
  \caption{An obstruction on $K_7$ that has a bicolored vertex at $v_1$ and is non-equinumerous off that vertex, and that is not a minimal obstruction because it contains an absolute rainbow $K_3$ with yellow, green and red edges. }
\label{valvert-odd-bad}
\end{center}
\end{figure}

\begin{figure}[htb]

\begin{center}
  \begin{tikzpicture}[  every path/.style={very thick}, 
        every node/.style={circle,fill=black,draw,inner sep = 0pt, minimum size= 2pt}
        ]
      
    \foreach \i in {1,...,5}{
        \path (\i*360/5+18:2) coordinate (n\i);
        \ifnum\i>1\relax
            foreach \j in {\i,...,1}{ (n\i) edge (n\j) } 
        \fi;
    }
   
     \draw[cyan] 
    (n1) -- (n2);
    \draw[cyan] 
    (n1) -- (n3);
    \draw[cyan] 
    (n1) -- (n4);
    \draw[black] 
    (n1) -- (n5);
    \draw[yellow] 
    (n2) -- (n5);
    \draw[red]
    (n4) -- (n5);
    \draw[green]
    (n2) -- (n4)(n2) -- (n3)(n3) -- (n4)(n3) -- (n5);
    \foreach \i in {1,...,5}{\fill (n\i) circle(2pt);}
  \end{tikzpicture}
  \caption{A minimal obstruction on $K_5$ that has both a bicolored vertex and a non-equinumerous coloring of the remaining edges.}
\label{Bivertex-k5}
\end{center}
\end{figure}

When there is a bicolored vertex but the remainder of the coloring is not equinumerous, the graph may (as in Figure \ref{Bivertex-k5}) or may not (as in Figure \ref{valvert-odd-bad}) be a minimal obstruction.  We do not have any criterion for when an arbitrary coloring will or will not be a minimal obstruction.

\begin{question}\label{vvq}
Find an if-and-only-if criterion for a graph in the setting of Theorem \ref{valvert-odd} to be a minimal obstruction.
\end{question}

Next, we consider a graph like that in Theorem \ref{valvert-odd} but for even $n$.  We want the color classes other than those at the special vertex to be nearly equinumerous.
\begin{figure}
    \centering
\[
\begin{tikzpicture}[
    scale=1.0,
    every path/.style={line width=1.5pt},
    every node/.style={
        circle,
        fill=black,
        draw,
        inner sep=0pt,
        minimum size=2pt
    }
]

\foreach \i/\a in {
1/90,
2/30,
3/-30,
4/-90,
5/-150,
6/150}
{
    \node[label=\a:$v_{\i}$] (v\i) at (\a:2) {};
}

\draw[red]
(v1)--(v2);

\draw[Orchid]
(v3)--(v4)
(v2)--(v5)
(v4)--(v6);

\draw[green!70!black]
(v1)--(v4)
(v2)--(v6)
(v3)--(v5);

\draw[orange]

(v2)--(v3)
(v5)--(v4);
\draw[violet]
(v5)--(v6)
(v3)--(v6)
(v2)--(v4);

\draw[cyan]
(v1)--(v3)
(v1)--(v4)
(v1)--(v5)
(v1)--(v6);

\end{tikzpicture}
\]
 \caption{An illustration of Theorem \ref{valvert-even} for $K_6$ with $v_1$ the bicolored vertex.}
    \label{bivertexeven}
\end{figure}
\begin{theorem}[Bicolored Vertex: Even Case]\label{valvert-even}
For even $n\geq3$, let $K_n$ have $n$ color classes such that one vertex $v$ is incident with all the edges of two classes but with no other edges, and the remaining edges are colored in $n-2$ other colors so that each color class has exactly $\frac12(n-2)$ or $\frac12 n$ edges.  Then this graph is a minimal obstruction.
\end{theorem}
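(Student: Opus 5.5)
We follow the proof of Theorem \ref{valvert-odd}, replacing Suzuki's theorem by the Akbari--Alipour theorem (Theorem \ref{aa}).

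First, $G=K_n$ has $r_1(G)=n>n-1=r_2(G)$, so it is an obstruction. Let $C_{n-1}$ and $C_n$ be the two color classes at the bicolored vertex $v$. Then $C_{n-1}\cup C_n$ is exactly the star at $v$, a spanning tree with $n-1$ edges. By Theorem \ref{r1r2min}, it suffices to show that every absolute proper subgraph $H$ satisfies $r_1(H)\le r_2(H)$. We split into three cases according to which of $C_{n-1},C_n$ are deleted.

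\emph{Case 1: both $C_{n-1}$ and $C_n$ are kept.} Then $H$ contains the spanning star, so $r_2(H)=n-1$. Since $H$ is proper, at least one color is missing and $r_1(H)\le n-1$. Hence $H$ is not an obstruction.

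\emph{Case 2: both are deleted.} Then $H$ is an absolute subgraph of $K_n\setminus v\cong K_{n-1}$, colored with $n-2=(n-1)-1$ colors. Each class has $\frac12(n-2)$ or $\frac12 n$ edges. Every class is nonempty because $n\ge4$; note that $n\ge3$ with $n$ even forces $n\ge4$. Theorem \ref{aa} applied to $K_{n-1}$ requires class sizes at most $\frac12((n-1)+3)=\frac12(n+2)$, and $\frac12 n\le\frac12(n+2)$ holds. So $K_{n-1}$ has a rainbow spanning tree. This tree uses all $n-2$ colors, so it is a total rainbow forest. By Theorem \ref{compare}, no absolute subgraph of $K_n\setminus v$ is an obstruction; this includes $H$, viewed on the vertex set of $K_n$ with $v$ isolated, since $r_2$ is unchanged by adding an isolated vertex.

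\emph{Case 3: exactly one is deleted, say $C_n$, and $C_{n-1}$ is kept.} Then $C_{n-1}$ is the set of all edges of $H$ at $v$, which is nonempty. Deleting it isolates $v$, so $C_{n-1}$ contains a bond of $H$. Suppose $H$ were an obstruction. By Lemma \ref{bond-lemma}, deleting $C_{n-1}$ would give an obstruction $H\setminus C_{n-1}$. But $H\setminus C_{n-1}$ is an absolute subgraph of $K_n\setminus v$, which Case 2 rules out. To apply Lemma \ref{bond-lemma} we need $H$ to have at least three colors. If $H$ has fewer than three colors, it is not an obstruction anyway by Proposition \ref{obs12colors}.

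The main point to check is the hypothesis of Theorem \ref{aa} in Case 2: the number of colors on $K_{n-1}$ is $n-2\ge (n-1)-1$, each class is nonempty, and the largest class size $\frac12 n$ is within the bound $\frac12(n+2)$. Suzuki's theorem does not suffice here, because $K_{n-1}$ would require class sizes at most $\frac12(n-1)<\frac12 n$. That is why the Akbari--Alipour improvement is invoked.
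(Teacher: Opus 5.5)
Your proof is correct and follows the paper's argument. It uses the same three-way split by which of the two classes at $v$ survive: the spanning star when both are kept, the Akbari--Alipour theorem on $K_n\setminus v$ when both are deleted, and the bond argument reducing the delete-one case to the delete-both case. Your version is slightly more careful than the paper's in three ways: you invoke Lemma~\ref{bond-lemma} directly rather than Proposition~\ref{no-bond} (which is stated for minimal obstructions), you allow further color classes to be deleted in Case 3, and you check the hypotheses of Theorem~\ref{aa} explicitly, including nonemptiness of the classes via $n\ge 4$.
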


\begin{proof}
In outline the proof is similar to that of Theorem \ref{valvert-odd}.  As in that proof, deleting $C_n$ or $C_{n-1}$ or neither, but not both, cannot give an obstruction.  If we delete both, we are left with $K_{n-1}$ having $n-2$ colors; the average size of a color class is $\frac12(n-1)$ so, to obtain near equinumerosity, there should be $\frac12(n-2)$ color classes of $\frac12 n$ edges and equally many of $\frac12(n-2)$ edges.  Since the graph has order $n-1$ and some color classes have more than $\frac12(n-1)$ edges, Suzuki's theorem does not apply, but that of Akbari and Alipour does; hence, the $K_{n-1}$ has a rainbow spanning tree, which is total; thus, $K_n \setminus v$ contains no obstruction.
\end{proof}

We suggest that there are generalizations of Theorems \ref{equinum-odd}--\ref{valvert-even} 
along the following lines, rather like Akbari and Alipour's theorem but without their upper bound on the size of a color class.  Let $K_n$ be colored in $n$ colors with color class sizes $m_1 \geq m_2 \geq \cdots \geq m_n$.  If that sequence is majorized by the sequence $n-1 > n-2 > \cdots > 0$ of vertex-cut sizes of shrinking complete graphs, then this edge-colored complete graph is a minimal obstruction.  More broadly, we suggest there is a generalization to an arbitrary connected graph $G$ with the sequence of vertex-cut sizes replaced by an analogous sequence of cut sizes in $G$ that we cannot precisely define.

\subsection{Bipartition Obstruction}

\begin{construction}[Bipartition obstruction]\label{completed-double-star}
In $K_n$ color a $K_{n-2}$ by a color $c_0$, the $K_2$ on the complementary vertex set by a second color $c_0^*$, and for each $v_1,v_2,\dots,v_{n-2}$ in the former and $w_1,w_2$ in the latter, color edges $v_iw_1$ and $v_iw_2$ by color $c_i$ for a total of $n$ colors.  (This type generalizes the first graph in the second row of Figure \ref{F-allK4}.)

\begin{figure}
    \centering
\begin{tikzpicture}[
    scale= 1.5,
    every path/.style={line width= 1.4 pt},
    every node/.style={
        circle,
        fill=black,
        draw,
        inner sep=0pt,
        minimum size=2pt
    }
]

\tikzset{
    edgelabel/.style={
        midway,
        draw=none,
        fill=white,
        inner sep=1pt,
        font=\scriptsize
    }
}

\foreach \i/\a/\lbl in {
1/90/{v_2},
2/30/{v_3},
3/-30/{v_4},
4/-90/{w_2},
5/-150/{w_1},
6/150/{v_1}}
{
    \node[label=\a:$\lbl$] (v\i) at (\a:2) {};
}


\draw[cyan]
(v1) --
node[
    midway,
    above,
    xshift=2pt,
    yshift=3pt,
    draw=none,
    fill=white
] {$c_0$}
(v2);
\draw[cyan]
(v1) --
node[
    midway,
    above,
    xshift=2pt,
    yshift=3pt,
    draw=none,
    fill=white
] {$c_0$}
(v3);

\draw[cyan]
(v2) --
node[
    pos = 0.7,
    above,
    xshift= 2 pt,
    yshift=3pt,
    draw=none,
    fill=white
] {$c_0$}
(v6);
\draw[cyan]
(v1) --
node[
    midway,
    above,
    xshift=2pt,
    yshift=3pt,
    draw=none,
    fill=white
] {$c_0$}
(v6);
\draw[cyan]
(v2) --
node[
    midway,
    above,
    xshift=2pt,
    yshift=3pt,
    draw=none,
    fill=white
] {$c_0$}
(v3);
\draw[cyan]
(v3) --
node[
    pos = 0.3,
    above,
    xshift=2pt,
    yshift=3pt,
    draw=none,
    fill=white
] {$c_0$}
(v6);

\draw[Orchid]
(v2) --
node[
    pos = 0.65,
    above,
    xshift=2pt,
    yshift=3pt,
    draw=none,
    fill=white
] {$c_3$}
(v5);
\draw[Orchid]
(v2) --
node[
    pos = 0.65,
    above,
    xshift=2pt,
    yshift=3pt,
    draw=none,
    fill=white
] {$c_3$}
(v4);
\draw[red]
(v4) --
node[
    midway,
    below,
    xshift=2pt,
    yshift=3pt,
    draw=none,
    fill=white
] {$c_1$}
(v6);
\draw[red]
(v5) --
node[
    midway,
    below,
    xshift=2pt,
    yshift=3pt,
    draw=none,
    fill=white
] {$c_1$}
(v6);

\draw[green!70!black]
(v1) --
node[
    pos = 0.65,
    above,
    xshift=2pt,
    yshift=3pt,
    draw=none,
    fill=white
] {$c_2$}
(v4);
\draw[green!70!black]
(v1) --
node[
    pos = 0.65,
    above,
    xshift=2pt,
    yshift=3pt,
    draw=none,
    fill=white
] {$c_2$}
(v5);
\draw[orange]
(v4) --
node[
    midway,
    below,
    xshift=2pt,
    yshift=3pt,
    draw=none,
    fill=white
] {$c_{0}^*$}
(v5);
\draw[yellow!85!orange]
(v4) --
node[
    midway,
    below,
    xshift=2pt,
    yshift=3pt,
    draw=none,
    fill=white
] {$c_4$}
(v3);
\draw[yellow!85!orange]
(v5) --
node[
    midway,
    below,
    xshift=2pt,
    yshift=3pt,
    draw=none,
    fill=white
] {$c_4$}
(v3);
\end{tikzpicture}
\caption{Illustration of Construction \ref{completed-double-star} for $n=6$.}
\label{bip-obs}
\end{figure}

\begin{prop}
This is a minimal obstruction if $n \geq 4$.
\end{prop}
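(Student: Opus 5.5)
First I check that the coloring has $n$ colors, namely $c_0, c_0^*, c_1,\dots,c_{n-2}$, on the connected graph $K_n$. Hence $r_1 = n > n-1 = r_2$, and the graph is an obstruction. For minimality I use Theorem \ref{r1r2min}. It suffices to show that every proper absolute subgraph $H$, obtained by deleting a nonempty set $U$ of color classes, satisfies $r_1(H) \le r_2(H)$. I write $r_1(H) = n - |U|$ and $r_2(H) = n - c(H)$, where $c(H)$ counts components on the full vertex set. So the goal is the inequality $c(H) \le |U|$.

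I split into cases according to whether $c_0$ and $c_0^*$ lie in $U$.

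\emph{Case 1: neither $c_0$ nor $c_0^*$ is deleted.} Then $H$ contains the clique $K_{n-2}$ on $v_1,\dots,v_{n-2}$ and the edge $w_1w_2$. If some $c_i$ survives, $H$ is connected, so $c(H) = 1 \le |U|$. If every $c_i$ is deleted, then $c(H) = 2 \le n-2 = |U|$, using $n \ge 4$.

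\emph{Case 2: $c_0$ is kept and $c_0^*$ is deleted.} Each surviving $c_i$ joins both $w_1$ and $w_2$ to the clique. So $c(H) = 1$ if some $c_i$ survives, and $c(H) = 3$ otherwise, in which case $|U| = n-1 \ge 3$.

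\emph{Case 3: $c_0$ is deleted.} This is the main case. The vertices $v_i$ now connect only through $w_1, w_2$. Let $S$ be the set of indices $i$ with $c_i$ kept, and put $s = |S|$. If $s \ge 1$, the $v_i$ with $i \in S$ together with $w_1, w_2$ form a single component, and each of the remaining $n-2-s$ vertices $v_i$ is isolated. This gives $c(H) = 1 + (n-2-s) = n-1-s$, regardless of whether $c_0^*$ is kept. The deleted set has size $|U| = 1 + (n-2-s) + [c_0^* \in U] \ge n-1-s$, so the inequality holds. If $s = 0$, then $c(H) = n-2+c(\{w_1,w_2\})$, which is $n-1$ or $n$ according as $c_0^*$ is kept or deleted, and $|U|$ equals $n-1$ or $n$ correspondingly. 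The case $|U| = n$ is excluded since $U$ is proper, and the case $|U| = n-1$ gives equality.

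The only delicate point is this bookkeeping in Case 3: each deleted $c_i$ isolates exactly one vertex, so the count balances exactly. This is why the construction is tight. The hypothesis $n \ge 4$ enters only in Case 1, where it ensures $c(H) = 2 \le n-2$.
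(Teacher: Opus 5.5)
Your proof is correct and follows essentially the paper's route: the same case split according to which of $c_0$, $c_0^*$ are deleted, then counting how many vertices $v_i$ become isolated, which you package as the single inequality $c(H)\le |U|$. One small correction to your closing remark: $n\ge 4$ is also used in Case 2, where you need $c(H)=3\le n-1$. (Your count there is sharper than the paper's stated $r_2=n-2$, which should be $n-3$, with equality $r_1=r_2$ at $n=4$.) It is also needed to make the $c_0$-class $K_{n-2}$ nonempty, so that there really are $n$ colors and $r_1(H)=n-|U|$.
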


\begin{proof}
It is an obstruction because $r_1=n$ and $r_2 = n-1$.  Since removing any one color class reduces $r_1$ while keeping the graph connected, the resulting subgraph with $n-1$ colors is not an obstruction.  

Now consider cases.  Suppose we remove color classes $c_0$ and $c_0^*$.  The remaining graph is $K_{2,n-2}$ in which $r_1 = n-2 = r_2-1$ and each vertex $v_i$, of valency 2, is monochromatic.  Removing any number $s \geq 0$ of additional color classes now reduces both $r_1$ and $r_2$ by $s$, unless $s = n-2$, in which case the remaining graph consists of isolated vertices; in no case do we get an obstruction.  If we remove class $c_0$ and not $c_0^*$ and $s \geq 1$ additional color classes, the same argument applies, except that $s=n-2$ is not different from $s<n-2$.  If we remove class $c_0^*$ and not $c_0$, and $s \geq 1$ additional color classes, then $r_2=n-1$ 
if $s<n-2$ and $n-2$ if $s=n-2$; in each such case $r_1 < r_2$.  Finally, if we keep classes $c_0$ and $c_0^*$ but delete $s \geq 2$ of the 2-edge classes $c_i$, again $r_2 = n-1$ except when $s = n-2$ and in every case $r_1 \leq r_2$ for the deleted subgraph.  Thus, no absolute proper subgraph of this colored $K_n$ is an obstruction, which proves minimality.
\end{proof}

\end{construction}

\begin{figure}
\begin{center}
\begin{tikzpicture}[
    every node/.style={
        circle,
        fill=black,
        draw,
        inner sep=0pt,
        minimum size=4pt
    }
]

\node[label=above:$w_2$] (v1) at (90:2) {};
\node[label=right:$v_3$] (v2) at (18:2) {};
\node[label=below right:$w_1$] (v3) at (-54:2) {};
\node[label=below left:$v_2$] (v4) at (-126:2) {};
\node[label=left:$v_1$] (v5) at (162:2) {};

\draw[magenta, dashed, very thick]      (v1)--(v2);
\draw[brown, very thick]     (v1)--(v3);
\draw[green, dashed, very thick]    (v1)--(v4);
\draw[yellow, dashed, very thick]     (v1)--(v5);

\draw[magenta, very thick]  (v2)--(v3);
\draw[cyan, very thick]   (v2)--(v4);
\draw[cyan, very thick]   (v2)--(v5);

\draw[green, very thick]    (v3)--(v4);
\draw[yellow, very thick]   (v3)--(v5);

\draw[cyan, very thick]     (v4)--(v5);

\end{tikzpicture}

\caption{Our counterexample to Question \ref{q-double-star}.  The dashed edges are the deleted edges.}
\label{F-q-double-star}
\end{center}
\end{figure}

\begin{question}\label{q-double-star}
Suppose in Construction \ref{completed-double-star} we delete one edge from some pairs $v_iw_1, v_iw_2$, but not from every such pair.  The result is an obstruction since $r_1 = n > r_2 = n-1$.  Is this obstruction minimal?  
\end{question}

We know it might not be minimal.  For example, if $n=5$ and we delete \emph{all} the edges $v_iw_2$ (see Figure \ref{F-q-double-star}), we get a non-minimal obstruction since what results is the upper right $K_4$ obstruction in Figure \ref{F-allK4} as an absolute subgraph, plus the edge $w_1w_2$.

\subsection{Disconnected minimal obstructions}

A minimal obstruction need not be connected.  We demonstrate this with some examples.

\begin{figure}[hb]
\centering
\begin{tikzpicture}[
    every node/.style={circle,fill=black,draw,inner sep=0pt,minimum size=2pt},
    every path/.style={line width=1.4pt}
]
\begin{scope}
    
\node[label=below:$a$] (A) at (0,0) {};
\node[label=above:$b$] (B) at (0,2.5) {};

\node[label=above:$c$] (D) at (-2.5,2.5) {};
\node[label=below:$d$] (E) at (-2.5,0) {};


\draw[green!90!black,line width= 1.5 pt] (A)--(B);
\draw[orange] (B)--(D);
\draw[violet] (D)--(E);
\draw[cyan] (E)--(A);


\node[draw=none,fill=none,font=\large] at (-1.25,1.25) {$C_m$};

\end{scope}
 \begin{scope}[xshift=1cm]
\node[label=below:$g$] (A) at (0,0) {};
\node[label=above:$f$] (B) at (0,2.5) {};
\node[label=right:$e$] (C) at (2.6,1.25) {};
\draw[
    green!90!black,
    line width= 1.6 pt,
    line cap=round
] (A)--(B);
\draw[DeepPink] (B)--(C);
\draw[yellow!90!orange] (C)--(A);
\node[draw=none,fill=none,font=\large] at (0.87,1.25) {$C_n$};
\end{scope}
 \node[draw=none,fill=none] at (0.4,- 0.85) {$B = C_m \sqcup C_n$};
\end{tikzpicture}
\vspace{-1cm}
\caption{A graph $B$ to illustrate Construction \ref{two-cycles}.}
\label{F-2-cycles}
\end{figure}

\begin{construction}[One shared color]\label{two-cycles}
Let $B$ consist of two disjoint rainbow cycles, $C_m$ and $C_n$, that have one common color.  This is a minimal obstruction with $r_1 = m+n-1$ and $r_2 = m+n-2$.

\begin{figure}[hb]
\centering
\begin{tikzpicture}[
    every path/.style={thick},
    every node/.style={circle,fill=black,draw,inner sep=0pt,minimum size=1pt}
]


\begin{scope}
    \draw[red] (0,0)--(2,0);
    \draw[cyan] (2,0)--(2,2);
    \draw[red] (2,2)--(0,2);
    \draw[cyan] (0,2)--(0,0);
    \draw[green] (0,0)--(2,2);
    \draw[yellow] (0,2)--(2,0);

    \foreach \corner in {(0,0),(2,0),(2,2),(0,2)}
        \node at \corner {};

    \node[
    draw=none,
    fill=none,
    font=\large
] at (1,-0.5) {$B_1$};
\end{scope}

\begin{scope}[xshift=2.8cm]
    \draw[DeepPink] (0,0)--(2,0);
    \draw[teal!80!white] (2,0)--(2,2);
    \draw[DeepPink] (2,2)--(0,2);
    \draw[teal!80!white] (0,2)--(0,0);
    \draw[violet] (0,0)--(2,2);
    \draw[orange] (0,2)--(2,0);

    \foreach \corner in {(0,0),(2,0),(2,2),(0,2)}
        \node at \corner {};

    \node[
    draw=none,
    fill=none,
    font=\large
] at (1,-0.5) {$B_2$};
\end{scope}


\begin{scope}[xshift= 7cm]
    \draw[red] (0,0)--(2,0);
    \draw[cyan] (2,0)--(2,2);
    \draw[red] (2,2)--(0,2);
    \draw[cyan] (0,2)--(0,0);
    \draw[green] (0,0)--(2,2);
    \draw[yellow] (0,2)--(2,0);

    \foreach \corner in {(0,0),(2,0),(2,2),(0,2)}
        \node at \corner {};
\end{scope}

\begin{scope}[xshift= 9.8 cm]
    \draw[DeepPink] (0,0)--(2,0);
    \draw[teal!80!white] (2,0)--(2,2);
    \draw[DeepPink] (2,2)--(0,2);
    \draw[teal!80!white] (0,2)--(0,0);
    \draw[green] (0,0)--(2,2);
    \draw[orange] (0,2)--(2,0);

    \foreach \corner in {(0,0),(2,0),(2,2),(0,2)}
        \node at \corner {};
\end{scope}

\node[
    draw=none,
    fill=none,
    font=\large
] at (9.4,-0.7) {$B = B_1 \sqcup B_2$};
\draw[
    cyan!80!blue,
    solid,
    ->,
    >=Stealth,
    line width=1.5pt
] (5.2,1) -- (6.6,1);
\end{tikzpicture}
\vspace{-1cm}
\caption{Illustration of Construction \ref{disconn-construction}.  The second $B_2$ has one color changed to a color (green) that appears in $B_1$.}
\label{F-2-components}
\end{figure}

For the proof, consider that deleting the two edges of the common color results in two paths, which by Proposition \ref{forest} cannot be or contain an obstruction.  Deleting one edge of a non-repeated color leaves a disjoint path and a cycle that have one common color, which cannot be an obstruction for the following reason.  Suppose it were.  The path edge with the common color is an isthmus, so its color class contains a bond.  By Lemma \ref{bond-lemma}, deleting that color class must give an obstruction, but it gives three paths, which is a forest and thus not an obstruction, by Proposition \ref{forest}.
\end{construction}

We generalize  Construction \ref{two-cycles} as follows.
\begin{construction}\label{disconn-construction}
Let $B_1$ and $B_2$ be two minimal obstructions having no colors in common.  Form $B$ by taking the disjoint union $B_1 \sqcup B_2$ and changing one color in $B_2$ to a color in $B_1$.  Then $B$ is a minimal obstruction.
\end{construction}

\begin{proof}
$B$ is an obstruction since by applying Proposition \ref{obs-ranks} to the minimality of $B_1$ and $B_2$, we have
$r_1(B) = r_1(B_1)+r_1(B_2) - 1 = r_2(B_1) + r_2(B_2) + 1 = r_2(B) + 1$.

To prove $B$ is minimal, consider deleting one color class $C$.  If it is the ``overlapping'' color class that has edges in both $B_1$ and $B_2$, we have a graph $B \setminus C$ in which, for every absolute subgraph $B'$,  
$$r_1(B') = r_1(B_1 \cap B')+r_1(B_2 \cap B') \leq r_2(B_1 \cap B') + r_2(B_2 \cap B') = r_2(B')$$ 
by minimality of $B_1$ and $B_2$.  Therefore, no subgraph of $B \setminus C$ is an obstruction.

If we delete a non-overlapping color class $C$, say from $B_1$, then $r_1(B \setminus C) = r_1(B) - 1 = r_2(B) = r_2(B_1) + r_2(B_2) = r_2(B_1 \setminus C) + r_2(B_2) = r_2(B \setminus C)$ by Proposition \ref{obs-ranks} applied to $B_1$.  For every absolute subgraph $B'$ of $B \setminus C$, $r_1(B') = r_1(B_1 \cap B') + r_1(B_2) - 1 \leq r_2(B_1 \cap B') + r_2(B_2) = r_2(B')$, so $B'$ is not an obstruction.

We conclude that $B$ is a minimal obstruction.
\end{proof}

The next construction gives a somewhat different disconnected minimal obstruction.

\begin{construction}\label{disc-obs}
Take two separate diamond ($K_4 \setminus e$) graphs (so $r_2=6$) and 7 colors for the 10 edges, so that three colors appear twice and four appear once.  The doubled colors appear at the 3 edges on one trivalent vertex in both diamonds.  The other, ``simple'' colors are applied arbitrarily to the other edges (Figure \ref{F-disconnected}).  
Call this graph $B$.

\begin{figure}[htbp]
\begin{center}
  \[\begin{tikzpicture}[
        every path/.style={thick}, 
        every node/.style={circle,fill=black,draw,inner sep = 0pt, minimum size= 2pt}
        ]
    \draw[cyan] (0,0)--(2,0);
    \draw[yellow] (2,0)--(2,2);
    \draw[Purple] (2,2)--(0,2);
    \draw[DeepPink] (0,2)--(0,0);
    \draw[green] (0,2)--(2,0);
    \foreach \corner in {(0,0), (2,0), (2,2), (0,2)} 
        \node at \corner {};
\end{tikzpicture} \quad \quad  \begin{tikzpicture}[
        every path/.style={thick}, 
        every node/.style={circle,fill=black,draw,inner sep = 0pt, minimum size= 2pt}
        ]
    \draw[cyan] (0,0)--(2,0);
    \draw[yellow] (2,0)--(2,2);
    \draw[red] (2,2)--(0,2);
    \draw[brown] (0,2)--(0,0);
    \draw[green] (0,2)--(2,0);
    \foreach \corner in {(0,0), (2,0), (2,2), (0,2)} 
        \node at \corner {};
\end{tikzpicture}\] 
\caption{A disconnected minimal obstruction $B$ as in Construction \ref{disc-obs}.}
\label{F-disconnected}
\end{center}
\end{figure}

To prove minimality, first consider deleting a doubled color from $B$, giving a graph $B'$.  Deleting an edge from each diamond gives either $C_4$ or a triangle with a pendant edge.  If the latter, delete the pendant edge.  The result, call it $B''$, consists of two cycles $C_3$, or two $C_4$'s, or one $C_3$ and one $C_4$.  It is not an obstruction, since $r_1=r_2 = 4$, $6$, or $5$, respectively.  Since $B''$ is not an obstruction, by Lemma \ref{bond-lemma}, $B'$ cannot be an obstruction.  It is easy to see that removing any combination of color classes from $B''$ will keep $r_1 \leq r_2$.

Now, observe that if the first step is to delete a simple color from $B$, let us say from the ``first'' diamond, the result is a graph with $r_1=6=r_2$ and that has an isthmus of a doubled color, say $i$.  Deleting the two $i$-colored edges leaves us in the situation of the preceding paragraph.

It follows that no proper subgraph of $B$ that is a union of color classes is an obstruction; thus, $B$ is minimal.
\end{construction}

Construction \ref{disc-obs} suggests that it and Construction \ref{disconn-construction} have a more complicated generalization that contains both.  We have not tried to formulate such a generalization; we suggest that as a research question.
\begin{problem}
Find a general construction of minimal obstructions that incorporates Constructions \ref{disconn-construction} and \ref{disc-obs}.
\end{problem}

We note that all the results involving disjoint union continue to be valid if the operation is identifying a vertex in each graph instead of disjoint union.  For instance, we might identify the upper right vertex in the left-hand graph of Figure \ref{F-disconnected} with the upper left vertex in the right-hand graph (call the resulting graph $B_1$), and since every subgraph in $B_1$ has the same values of  $r_1$ and $r_2$ as does the corresponding subgraph in the disjoint union $B$, every absolute proper subgraph in $B'$ satisfies $r_1 \leq r_2$; thus $B'$, like $B$, is a minimal obstruction, which has a cut vertex instead of being disconnected.  (The relevant property is that both have multiple blocks.  It is the blocks that determines the matroid rank $r_2$.)

\subsection{Colored-edge sum}

Now we define the colored-edge sum, yet another way to obtain new obstructions.

\begin{construction}  Let $B_1$ and $B_2$ be two minimal obstructions having exactly one color, say $0$, in common.  Choose 0-colored edges $e_1 \in E(B_1)$ and $e_2 \in E(B_2)$.  Identify $e_1$ with $e_2$, forming edge $e$ and an edge-colored graph $B$.  Call $B$ the \emph{colored-edge union} of $B_1$ and $B_2$.
Deleting $C_0$, the color class of $e$, from $B$ gives the \emph{colored-edge sum} of $B_1$ and $B_2$, call it $B'$.
\end{construction}

\begin{figure}[htbp]
\begin{center}

\begin{tikzpicture}[
    every path/.style={thick},
    every node/.style={circle,fill=black,draw,inner sep=0pt,minimum size=2pt}
]
    \draw[brown] (0,0)--(2,0);

    \draw[yellow!70!orange] (2,0)--(2,2)
        node[midway,right=3pt,draw=none,fill=none] {$\color{red}e\color{black}$};

    \draw[cyan] (0,2)--(2,0);
    \draw[red] (2,2)--(0,2);
    \draw[red] (0,2)--(0,0);
    \draw[cyan] (0,0)--(2,2);

    \begin{scope}[xshift=2cm]
        \draw[yellow] (2,0)--(2,2);
        \draw[green] (0,2)--(2,0);
        \draw[Grey] (0,0)--(2,0);
        \draw[Orchid] (2,0)--(2,2);
        \draw[Orchid] (2,2)--(0,2);
        \draw[green] (0,0)--(2,2);
    \end{scope}

    \foreach \corner in {(0,0), (2,0), (2,2), (0,2), (4,0), (4,2)} {
        \node at \corner {};
    }

    \node[draw=none,fill=none] at (2,-0.5) {$B$};

\end{tikzpicture}
\qquad\qquad
\begin{tikzpicture}[
        every path/.style={thick}, 
        every node/.style={circle,fill=black,draw,inner sep = 0pt, minimum size= 2pt}
        ]
    \draw[brown] (0,0)--(2,0);
    \draw[cyan] (0,2)--(2,0);
    \draw[red] (2,2)--(0,2);
    \draw[red] (0,2)--(0,0);
    \draw[cyan] (0,0)--(2,2);
    \begin{scope}[xshift=2cm]
            \draw[green] (0,2)--(2,0);
          \draw[Grey] (0,0)--(2,0);
          \draw[Orchid] (2,0)--(2,2);
          \draw[Orchid] (2,2)--(0,2);
          \draw[green] (0,0)--(2,2);
    \end{scope}
    
    \foreach \corner in {(0,0), (2,0), (2,2), (0,2), (4,0), (4,2)} {
        \node at \corner {};

    \node[draw=none,fill=none] at (2,-0.5) {$B'$};
    }
\end{tikzpicture}
\caption{A colored-edge union of two colored $K_4$ obstructions, along the central edge (left), and its corresponding colored-edge sum (right) with a minimal obstruction $B_0 = B'$. 
}
\label{edge-union-sum}
\end{center}
\end{figure}

\begin{prop}\label{edge-union}
Assume that each of $B_1 \setminus C_0$ and $B_2 \setminus C_0$ is connected.  
Then the colored-edge sum $B'$ of the minimal obstructions $B_1$ and $B_2$ contains an absolute subgraph that is a minimal obstruction $B_0$ and is not a subgraph of either $B_1$ or $B_2$.
\end{prop}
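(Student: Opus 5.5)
We first show that $B'$ is an obstruction and then pass to a minimal one. Write $V_1,V_2$ for the vertex sets of $B_1,B_2$. They share exactly the two endpoints of $e$, so $B$ and $B'$ have $|V_1|+|V_2|-2$ vertices. Since $B_1$ and $B_2$ share only color $0$, we have $r_1(B') = (r_1(B_1)-1)+(r_1(B_2)-1)$. By Proposition \ref{obs-ranks}, $r_1(B_i)-1 = r_2(B_i) = r_2(B_i\setminus C_0)$. The hypothesis says $B_i\setminus C_0$ is connected (ignoring isolated vertices if necessary), so $r_2(B_i\setminus C_0)=|V_i|-1$. Hence $r_1(B') = |V_1|+|V_2|-2$.

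The two connected pieces $B_1\setminus C_0$ and $B_2\setminus C_0$ meet in the two endpoints of $e$. Their union is therefore connected on $|V_1|+|V_2|-2$ vertices, giving $r_2(B') = |V_1|+|V_2|-3 < r_1(B')$. So $B'$ is an obstruction. Choose an absolute subgraph $B_0\subseteq B'$ that is an obstruction with the fewest color classes. It is a minimal obstruction, and it is absolute in $B'$.

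Next we show $B_0$ is not a subgraph of $B_1$ or $B_2$. Suppose $B_0\subseteq B_1$. Then its colors lie among the nonzero colors of $B_1$, so $B_0$ is a union of color classes of $B_1$ not including $C_0$. This makes it an absolute proper subgraph of $B_1$ (the color classes of $B_1$ other than $0$ lie entirely inside $B_1\setminus C_0\subseteq B'$). By minimality of $B_1$ and Theorem \ref{r1r2min}, $B_0$ cannot be an obstruction, a contradiction. The same argument applies to $B_2$.

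The main point needing care is the rank computation. We must check that the deleted vertices and edges behave as claimed: $C_0$ in $B$ consists of $e$ together with the other $0$-edges of both $B_i$. We also need the connectivity hypothesis to be applied with the correct vertex count, namely that $B_i\setminus C_0$ spans all of $V_i$, which follows since $r_2(B_i\setminus C_0)=r_2(B_i)$. Once the vertex count $|V_1|+|V_2|-2$ and the two component counts are pinned down, the rest is bookkeeping.
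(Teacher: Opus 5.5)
Your proof is correct and follows the paper's argument: show that $B'$ is an obstruction by a rank count, take a minimal absolute obstruction $B_0\subseteq B'$, and rule out $B_0\subseteq B_i$ because it would then be an absolute proper subgraph of the minimal obstruction $B_i$. The one difference is in the rank count. You use the connectivity hypothesis to compute $r_2(B')=r_1(B')-1$ exactly, while the paper computes $r_2(B)=r_2(B_1)+r_2(B_2)-1$ for the colored-edge union $B$ and then uses $r_2(B')\le r_2(B)$, so its argument never actually needs the connectivity assumption.
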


\begin{proof}
Consider maximal forests $T_1 \subseteq B_1$ containing $e_1$, $T_2 \subseteq B_2$ containing $e_2$, and $T = T_1 \cup T_2$, which is a maximal forest in $B$ that contains $e$.  Thus, $r_2(B) = r_2(B_1) + r_2(B_2) - 1$.  
Because each $B_i$ is a minimal obstruction, we have $r_1(B_i) = r_2(B_i)+1$.  
Now we compute $r_1(B),$
\begin{equation}
r_1(B) = r_1(B_1)+r_1(B_2) - 1 = r_2(B_1) + r_2(B_2) + 1 = r_2(B) + 2. 
\label{E-edge-union}
\end{equation}

Delete the color class $C_0$ from $B$, giving the colored-edge sum $B'$.  Then $r_1(B') = r_1(B) - 1 = r_2(B) + 1$ by Equation \eqref{E-edge-union}. 
Since $B' \subset B$, we have $r_1(B') > r_2(B) \geq r_2(B')$, so $B'$ is an obstruction.

Since $B'$ is an obstruction, it contains a minimal obstruction $B_0$ as an absolute subgraph. 
If $B_0$ were a subgraph of $B_1$ or $B_2$, it would be an absolute proper subgraph (being an absolute subgraph of $B'$, and because $B_1'$ and $B_2'$ are absolute subgraphs of $B'$) and therefore not an obstruction (by the minimality of $B_1$ and $B_2$).  Consequently, $B_0 \not\subseteq B_1, B_2$.  That concludes the proof.
\end{proof}

\begin{figure}[ht]
\begin{center}
\[
\begin{tikzpicture}
\coordinate (A) at (-1.5,-1.5);
\coordinate (B) at (-1.5, 1.5);
\coordinate (C) at ( 1.5, 1.5);
\coordinate (D) at ( 1.5,-1.5);
\draw (A) rectangle (C);
\draw[yellow, very thick] (B) -- (C); 
\draw[yellow, very thick] (A) -- (B); 
\draw[green, very thick] (A) -- (D); 
\draw[red, very thick]    (C) -- (D); 
\draw[cyan, very thick]
    (B) -- (D)
    node[midway, above right] {1};

\node[above left] at (B) {b};
\node[below left] at (A) {a};
\node[above, text=yellow!80!orange] at (0,1.5) {$0$};
\node[right, text=red]    at (1.5,0) {5};
\node[below, text = green]   at (0,-1.5) {2};
\node[left, text=yellow!80!orange]  at (-1.5,0) {$e_1 = 0$};
\node[draw=none,fill=none] at (-0.1,-2.6) {$B_1$};
\end{tikzpicture}\quad\quad \begin{tikzpicture}
\coordinate (A) at (-1.5,-1.5);
\coordinate (B) at (-1.5, 1.5);
\coordinate (C) at ( 1.5, 1.5);
\coordinate (D) at ( 1.5,-1.5);
\draw[yellow!80!orange, very thick]       (B) -- (C); 
\draw[RoyalBlue, very thick]       (C) -- (D); 
\draw[MediumPurple, very thick] (A) -- (D); 
\draw[yellow!80!orange, very thick]       (A) -- (B); 

\draw[DeepPink, very thick]
    (B) -- (D)
    node[midway, above right] {3};
\node[above left] at (B) {d};
\node[below left] at (A) {c};

\node[above, text=yellow!80!orange]     at (0,1.5) {$0$};
\node[right, text= RoyalBlue]         at (1.5,0) {6};
\node[below, text=MediumPurple]        at (0,-1.5) {4};
\node[left, text= yellow!80!orange]               at (-1.5,0) {$e_2 = 0$};
\node[draw=none,fill=none] at (-0.1,-2.6) {$B_2$};
\end{tikzpicture}\]
\[
\begin{tikzpicture}

\coordinate (A) at (0,0);
\coordinate (B) at (5,0);
\coordinate (C) at (2.5,2.8);
\coordinate (M) at ($(A)!0.5!(B)$);

\draw[green!70!black, very thick] (A) -- (M); 
\draw[MediumPurple, very thick]   (M) -- (B); 

\draw[cyan, very thick]      (A) -- (C); 
\draw[DeepPink, very thick]  (B) -- (C); 

\draw[red, very thick, dashed]       (A) -- ($(A)+(0,3)$); 
\draw[RoyalBlue, very thick, dashed] (B) -- ($(B)+(0,3)$); 

\filldraw[black] (A) circle (2pt);
\filldraw[black] (B) circle (2pt);
\filldraw[black] (C) circle (2pt);
\filldraw[black] (M) circle (2pt);

\node[above] at (C) {$b=d$};
\node[below] at (M) {$a=c$};

\node[below] at ($(A)!0.5!(M)$) {$2$};
\node[below] at ($(M)!0.5!(B)$) {$4$};

\node[left]  at ($(A)!0.5!(C)$) {$1$};
\node[right] at ($(B)!0.5!(C)$) {$3$};

\node[left]  at ($(A)+(0,1.5)$) {$5$};
\node[right] at ($(B)+(0,1.5)$) {$6$};
\node[below, text=green!70!black]
    at ($(A)!0.5!(M)$) {$2$};

\node[below, text=MediumPurple]
    at ($(M)!0.5!(B)$) {$4$};

\node[left, text=cyan]
    at ($(A)!0.5!(C)$) {$1$};

\node[right, text=DeepPink]
    at ($(B)!0.5!(C)$) {$3$};

\node[left, text=red]
    at ($(A)+(0,1.5)$) {$5$};

\node[right, text=RoyalBlue]
    at ($(B)+(0,1.5)$) {$6$};
\node[draw=none, fill=none] at (2.5,1.2) {$B_0 = C_4$};
 \node[draw=none,fill=none] at (2.5,- 1.0) {$B'$};

\end{tikzpicture}
\]

\caption{The bottom graph $B'$ is the colored-edge sum of the two top graphs.  It is a non-minimal obstruction with the minimal obstruction $B_0$ (a rainbow $C_4$) shown as the solid edges. }
\label{nonmin-edge-sum}
\end{center}
\end{figure}

\begin{example}\label{nonmin-edge-sum-example}
The minimal obstruction in the whole colored-edge sum $B'$ can be $B'$, as in Figure \ref{edge-union-sum}, but it seems likely it is usually a proper subgraph, as in Figure \ref{nonmin-edge-sum}.  This example is built from two copies of a diamond-graph minimal obstruction (Figure \ref{F-alldiamond}), with an edge in the quadrilateral as the identified edge $e$.  The significant thing about the example is that the sum results in an absolute proper subgraph  that is a rainbow $C_4$ that is not contained in either of the two original diamonds.
\end{example}

\section{Acknowledgments}
The first author would like to deeply thank Tarik Aougab, Gary Gordon, Kennard Lee, Piotr Przytycki, Johannes Rau, Roberta Shapiro, Daniel Studenmund, Edward Swartz and Zhaoshen Zhai for the useful and fruitful discussions about this problem and its accompanying geometric topological problem. The first author also extends deep thanks and heartfelt gratitude to Paul Apisa, David Futer, Mathew Stover, and Bena Tshishiku for giving her the opportunity to present lightning talks at CRM-Montreal, Brown University, University of Wisconsin- Madison, and Rutgers University on this research problem and its accompanying geometric topological one. The first author is so grateful and would like to express her deepest gratitude to Matt Bainbridge for giving her the opportunity to present a poster at Bloomington Geometry Workshop 2026 at Indiana University on this problem and its accompanying geometric topological one and lastly, the first author would like to express her heartfelt gratitude for Carnegie Mellon university and the organizers of the conference named Combinatorics at the Confluence for allowing her to present a poster on this paper at this conference. 



\begin{thebibliography}{99}

\bibitem{AA} S. Akbari and A. Alipour, Multicolored trees in complete graphs, \emph{J.\ Graph Theory} 54 (2007), no.\ 3, 221--232. 

\bibitem{AG} Tarik Aougab and Jonah Gaster, From arcs to curves: quadratic growth of 1-systems, submitted.  arXiv:2508.05555.

\bibitem{Bro} Hajo Broersma and Xueliang Li, Spanning trees with many or few colors in edge-colored graphs.  \emph{Discuss. Math. Graph Theory} 17 (1997), no. 2, 259--269.

\bibitem{Bru} Richard A. Brualdi and Susan Hollingsworth, Multicolored trees in complete graphs, \emph{J.\ Combin.\ Theory Ser.\ B} 68 (1996), 310--313.

\bibitem{GK} Stefan Glock, Daniela K\"uhn, Richard Montgomery, and Deryk Osthus, Decompositions into isomorphic rainbow spanning
trees, \emph{J.\ Combin.\ Theory Ser.\ B} 146 (2021), 439--484.

\bibitem{GM} Gary Gordon and Jennifer McNulty, \emph{Matroids:
A Geometric Introduction}, Cambridge University Press, Cambridge, 2012.

\bibitem{HP} Frank Harary and Edward M.\ Palmer, \emph{Graphical Enumeration}, Academic Press, New York, 1973.

\bibitem{Hu} Tony Huynh, Answer to ``Total rainbow'' trees,  MathOverflow, May 2, 2020, \url{https://mathoverflow.net/questions/359185/total-rainbow-trees}.

\bibitem{JMM} M. Juvan, A. Malni\v{c}, and B. Mohar, Systems of curves on surfaces, \emph{J. Combin. Theory Ser. B} 68 (1996), no. 1, 7--22.

\bibitem{KL} Mikio Kano and Xueliang Li, Monochromatic and heterochromatic subgraphs in edge-colored graphs - a survey, \emph{Graphs Combin.}\ 24 (2008), 237--263.

\bibitem{MRT} Justin Malestein, Igor Rivin, and Louis Theran, Topological designs, \emph{Geom. Dedicata} 168 (2014), 221--233.

\bibitem{OEIS} OEIS Foundation, The On-Line Encyclopedia of Integer Sequences, 2026, online at https://oeis.org

\bibitem{Ox} James Oxley, \emph{Matroid Theory}, 2nd ed., Oxford Univ.\ Press, Oxford, 2011.  See Theorem 11.3.15.

\bibitem{Suz} Kazuhiro Suzuki, A necessary and sufficient condition for the existence of a heterochromatic spanning tree in a graph, \emph{Graphs Combin.} 22 (2006), 261--269.

\bibitem{Vondrak} Jan Vondr\'ak, Lecture notes for ``Polyhedral Techniques in Combinatorial Optimization'', lecture 17, 2017.

\end{thebibliography}
\end{document}